\numberwithin{equation}{section}
\def\3bar{{|\hspace{-.02in}|\hspace{-.02in}|}}
\def\T{{\mathcal{T}}}
\def\b0{\boldsymbol{0}}
\def\bn{{\mathbf{n}}}
\def\bq{{\mathbf{q}}}
\def\b0{{\mathbf{0}}}
\def\bP{{\mathbf{P}}}
\def\ba{{\mathbf{a}}}
\def\bb{{\mathbf{b}}}
\newtheorem{example}{\bf Example}[section]
\newtheorem{remark}{Remark}[section]
\title{Supercloseness Analysis and Polynomial Preserving Recovery for a Class of Weak Galerkin Methods}
\author{
Ruishu Wang\thanks{Department of Mathematics, Jilin University, Changchun 130012,
China ({\tt wangrs16@mails.jlu.edu.cn}).}
\and
Ran Zhang\thanks{Department
of Mathematics, Jilin University, Changchun 130012, China
({\tt zhangran@mail.jlu.edu.cn}). The research of this author was supported in
part by China Natural National Science Foundation grants 11271157,
11371171, 11471141, U1530116, and by the Program for New Century
Excellent Talents in University of Ministry of Education of China.}
\and
Xu Zhang\thanks{Department of Mathematics and Statistics, Mississippi State University, Mississippi State, MS 39762, USA
({\tt xuzhang@math.msstate.edu})}
\and
Zhimin Zhang\thanks{Beijing Computational Science Research Center, Beijing 100193, China ({\tt zmzhang@csrc.ac.cn});
Department of Mathematics, Wayne State University, Detroit, MI 48202, USA ({\tt ag7761@wayne.edu}).
 The research of this author was supported in part by the US National Science Foundation grant DMS-1401090 and the
 National Natural Science Foundation of China grants 11471031, 91430216, U1530401.} }
\begin{document}

\maketitle

\begin{abstract}
In this paper, we analyze the convergence and supercloseness properties of a class of weak Galerkin (WG) finite element methods for solving second-order elliptic problems. It is shown that the WG solution is superclose to the Lagrange type interpolation using Lobatto points. This supercloseness behavior is obtained through some newly designed stabilization terms. A post-processing technique using the polynomial preserving recovery (PPR) is introduced for the WG approximation. Superconvergence analysis is carried out for the PPR approximation. Numerical examples are provided to verify our theoretical results.
\end{abstract}

\begin{keywords}
supercloseness, superconvergence, polynomial preserving recovery, weak Galerkin method.
\end{keywords}

\begin{AMS}
Primary, 65N30, 65N15, 65N12; Secondary, 35J50, 35B45
\end{AMS}

\section{Introduction}
The weak Galerkin (WG) finite element methods (FEM) refer to a new class of
finite element discretizations for solving partial differential equations (PDE). In the WG methods, classical differential operators are replaced by generalized differential operators as distributions. Unlike the classical FEM that impose continuity in the approximation space, the WG methods enforce the continuity weakly in the formulation using generalized weak derivatives and parameter-free stabilizers. The WG methods are naturally extended from the standard FEM, and are more advantageous over FEM in several aspects. For instance, high order WG spaces are usually more convenient to construct than conforming FEM spaces since there is no continuity requirement on the approximation spaces. Also, the relaxation of the continuity requirement enables easy implementation of WG methods on polygonal meshes.

The first WG method was introduced in \cite{WangYe13} for the second-order elliptic equation, in which the $H(div)$ finite elements such as Raviart Thomas elements are used to approximate weak gradients. Later in \cite{MuWangYe15, WangYe14}, WG methods following the stabilization approach were introduced, which can be applied on polygonal meshes. This new stabilized WG discretization has been applied to many classical PDE models, such as elliptic interface problems \cite{MWYinterface}, the Maxwell equation \cite{mwyz-maxwell}, Brinkman equation \cite{MWY_JCP14,zzm16}, and biharmonic equation \cite{mwyz-biharmonic,zz-biharmonic}.

It is well known that superconvergence is an important and desirable mathematical property of numerical methods for solving PDE. Superconvergence phenomenon means the convergence rate at certain points is higher than the optimal global convergence rate of numerical solutions. Due to its wide application, superconvergence has been extensively studied in the past decades, see for example \cite{A84,ADT81,babuska,2015CaoZhangZhang,D72,HLL12,krizek,SSW96,W95}.
There are also some literature on superconvergence analysis for WG methods. For instance, in \cite{WangYe13}, the error estimate revealed a superconvergence for the WG approximation (without stabilization terms) on simplicial meshes. In \cite{HH14}, superconvergence of the WG methods with stabilizers are obtained by $L^2$ projection methods.

One goal of this article is to analyze the supercloseness property of a class of WG methods with generalized stabilizers.  Unlike the stabilizer introduced in \cite{MuWangYe15}, there is a fine-tune parameter in our new stabilizer \eqref{bi_form_1}, and it reduces to the standard stabilizer when the parameter $\alpha=1$.  We will show that this new parameter plays a critical role in the analysis for supercloseness. To be more specific, we show that the new WG solutions are superclose to a Lagrange type interpolation of the exact solution.

Another focus of this article is to develop an efficient post-processing technique of WG methods which leads to a better approximation of the gradient of solution. We adopt the polynomial preserving recovery (PPR) technique \cite{Naga2004,Zhang2005,ZN05} in our post-processing. The main idea of PPR is to construct a higher-order polynomial locally around each node based on current numerical solution. Unlike the standard FEM approximation which is a continuous function, WG solution is discontinuous across the boundary of elements; hence, there can be multiple values associated with a single node. Consequently, we will need to introduce an appropriate weighted average to unify these values before applying the standard PPR scheme. The analysis of superconvergence of PPR scheme relies heavily on the aforementioned supercloseness property.

The rest of the paper is organized as follows. In Section 2, we introduce the definition of weak functions/derivatives, and present the WG method for the model second order elliptic equation.  In Section 3, we describe a Lagrange type interpolation operator which is used in the supercloseness analysis. In Section 4, we present the error estimation for supercloseness. Section 5 is devoted to the construction of the PPR operator for WG solutions. In Section 6, we present the superconvergence analysis for PPR scheme. In Section 7, we provide some numerical experiments.

\section{The WG method}
In this paper, we consider the following second-order elliptic
problem with homogeneous Dirichlet boundary condition as a model problem:
\begin{equation}\label{model}
\begin{aligned}
-\Delta u&=f,~\quad {\rm in}~\Omega,
\\
u&=0,~\quad {\rm on}~\partial \Omega,
\end{aligned}
\end{equation}
where $\Omega\subset\mathbb R^2$ is an open rectangular domain or a union of rectangular domains.

The weak formulation for (\ref{model}) can be written as: find $u\in
H_0^1(\Omega)$ such that
\begin{eqnarray}\label{weak_model}
(\nabla u,\nabla v)=(f,v),\quad \forall v\in H_0^1(\Omega),
\end{eqnarray}
where  $(\cdot,\cdot)$ is the $L^2$-inner product, and $H_0^1(\Omega)$ is a subspace of Sobolev space
$H^1(\Omega)=\{v:v\in L^2(\Omega),\nabla v\in [L^2(\Omega)]^2\}$
with vanishing boundary value.

Let $\mathcal{T}_h$ be a shape-regular rectangular mesh of domain $\Omega$.
For each element $T\in\T_h$, denote by $h_T$ the diameter of $T$.
The mesh size of $\T_h$ is defined as $h=\max_{T\in \T_h} h_T$.
Denote by ${\cal E}_h$ the set of all edges in ${\cal T}_h$ and
${\cal E}^0_h={\cal E}_h\setminus {\partial\Omega}$ the set of all
interior edges in ${\cal T}_h$. Let $Q_{k}(T)$ be a set of
polynomials that the degrees of $x$ and $y$ are no more than $k$, and let
$$
Q_{k}=\{v: v|_T\in Q_{k}(T),~\forall T\in\mathcal{T}_h\}.
$$
Define the space of weak functions on every element $T$ by
\begin{eqnarray*}
\mathcal{V}(T)&=&\{v=\{v_0,v_b\}:v_0\in L^2(T),v_b\in L^2(\partial
T)\}.
\end{eqnarray*}
Note that $v_0$ and $v_b$ are completely independent.
\begin{definition}\cite{WangYe13}
Denote by $\nabla_w v$ the weak gradient of $v\in \mathcal{V}(T)$
as a linear functional of the Sobolev space $H(div;T)=\{\bq\in[L^2(T)]^2: \nabla\cdot \bq\in L^2(T)\}$.
That is the action on any function $\bq\in H(div;T)$ is given by
\begin{eqnarray*}
\langle\nabla_w  v,\bq\rangle_T:&=&-(v_0,\nabla \cdot \bq)_T+\langle
v_b,  \bq\cdot\bn \rangle_{\partial T},
\end{eqnarray*}
where $\bn$ is the unit outward normal vector on $\partial T$.
\end{definition}

Next we define the space $W_r(T)$ to be
\begin{eqnarray*}
W_r(T)&=&[Q_{r-1,r},Q_{r,r-1}]^t,
\end{eqnarray*}
where $Q_{i,j}$ is a set of polynomials whose degrees of $x$ and $y$ are no more than $i$ and $j$, respectively.
\begin{definition}
The discrete weak gradient operator of $v\in \mathcal{V}(T)$, denoted by
$\nabla_{w,r,T} v\in W_r(T)$, is the unique function in $W_r(T)$,
satisfying
\begin{equation}\label{weak_grad}
(\nabla_{w,r,T} v,\bq)_T=-(v_0,\nabla \cdot \bq)_T+\langle v_b,
\bq\cdot\bn \rangle_{\partial T},\quad \forall \bq\in W_r(T),
\end{equation}
where $\bn$ is the unit outward normal vector on $\partial T$.
\end{definition}

Let $V_h$ and $W_h$ be the global WG spaces of weak functions and weak gradients as follows
\begin{eqnarray*}
V_h&=&\{ v=\{ v_0, v_b\}: v_0|_T\in Q_k(T), v_b|_{e}\in
P_k(e),e\subset\partial T, T\in\mathcal{T}_h\},
\\
W_h&=&\{\bq: \bq|_T\in W_k(T), T\in\mathcal{T}_h\}.
\end{eqnarray*}
Note that any weak function $v$ in $V_h$ has a single-valued component $v_b$ on
each edge $e\in{\cal E}_h$. Let $V_h^0$ be the subspace of $V_h$
with vanishing boundary value on $\partial \Omega$.

For each $v\in V_h$, the discrete weak gradient $\nabla_{w,k}v\in
W_h$ is computed piecewisely using (\ref{weak_grad}) on each element $T\in
\mathcal{T}_h$, i.e.,
\begin{align*}
(\nabla_{w,k} v)|_T=&\nabla_{w,k,T} (v|_T), \quad \forall ~v\in V_h.
\end{align*}
For simplicity, we drop the subscript $k$ from the
notation $\nabla_{w,k}$ in the rest of the paper.

Define the following bilinear forms
\begin{eqnarray}
\label{bi_form_1}s(w,v)&=&\sum_{T\in\mathcal{T}_h}h^{-\alpha}\langle w_0- w_b, v_0-
v_b\rangle_{\partial T}, \quad\alpha\geqslant1, \forall w, v\in V_h,
\\
\label{bi_form_2}a_s(w,v)&=&(\nabla_w w,\nabla_w v)_h+s(w,v), ~~~~\forall w, v\in V_h,
\end{eqnarray}
where $(\cdot,\cdot)_h=\sum_{T\in\mathcal{T}_h}(\cdot,\cdot)_T$.
\begin{lemma}
The functional $\3bar \cdot\3bar: V_h\to \mathbb{R}$ defined by
\begin{eqnarray}\label{energy norm}
\3barv\3bar^2=a_s(v,v),\quad \forall v\in V_h,
\end{eqnarray}
is a norm on the space $V_h^0$. Moreover,
\begin{eqnarray}
\sum_{T\in\mathcal{T}_h}\|\nabla v_0\|^2_T&\leq& C\3barv\3bar^2,~~\forall v\in V_h,\label{prop_norm_1}
\\
\sum_{T\in\mathcal{T}_h}h_T^{-1}\|v_0-v_b\|^2_{\partial T}&\leq&C \3barv\3bar^2,~~\forall v\in V_h.\label{prop_norm_2}
\end{eqnarray}
\end{lemma}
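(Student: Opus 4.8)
The plan is to prove the three claims in order: first the two inequalities \eqref{prop_norm_1} and \eqref{prop_norm_2}, which are the substance of the lemma, and then deduce that $\3bar\cdot\3bar$ is a norm on $V_h^0$ from \eqref{prop_norm_1} together with a discrete Poincar\'e argument. The key technical tool is the defining identity \eqref{weak_grad} for the discrete weak gradient. On a single element $T$, integrating by parts in \eqref{weak_grad} gives, for any $\bq\in W_k(T)$,
\begin{equation*}
(\nabla_{w,k,T}v,\bq)_T=(\nabla v_0,\bq)_T-\langle v_0-v_b,\bq\cdot\bn\rangle_{\partial T}.
\end{equation*}
Since $\nabla v_0\in [Q_{k-1}(T)]^2\subset W_k(T)$, I would choose $\bq=\nabla v_0$ to obtain
\begin{equation*}
\|\nabla v_0\|_T^2=(\nabla_{w,k,T}v,\nabla v_0)_T+\langle v_0-v_b,\nabla v_0\cdot\bn\rangle_{\partial T}.
\end{equation*}
The first term is bounded by $\|\nabla_w v\|_T\,\|\nabla v_0\|_T$; for the boundary term I would use a trace/inverse inequality $\|\nabla v_0\cdot\bn\|_{\partial T}\le C h_T^{-1/2}\|\nabla v_0\|_T$, valid on shape-regular rectangles, to get
\begin{equation*}
\langle v_0-v_b,\nabla v_0\cdot\bn\rangle_{\partial T}\le C h_T^{-1/2}\|v_0-v_b\|_{\partial T}\,\|\nabla v_0\|_T.
\end{equation*}
Combining and using Young's inequality to absorb $\|\nabla v_0\|_T$ yields $\|\nabla v_0\|_T^2\le C\big(\|\nabla_w v\|_T^2 + h_T^{-1}\|v_0-v_b\|_{\partial T}^2\big)$. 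Summing over $T\in\T_h$ and recalling that $h^{-\alpha}\ge h^{-1}\ge c\,h_T^{-1}$ for $\alpha\ge1$ (so the stabilizer $s(v,v)$ dominates $\sum_T h_T^{-1}\|v_0-v_b\|_{\partial T}^2$ up to a constant), the right-hand side is bounded by $C\,a_s(v,v)=C\3bar v\3bar^2$. This proves \eqref{prop_norm_1}, and \eqref{prop_norm_2} follows immediately from the same observation that $\sum_T h_T^{-1}\|v_0-v_b\|_{\partial T}^2 \le C\,s(v,v)\le C\3bar v\3bar^2$.

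For the norm property, positive homogeneity and the triangle inequality are inherited from the fact that $a_s(\cdot,\cdot)$ is a symmetric nonnegative bilinear form, so it remains to show $\3bar v\3bar=0\Rightarrow v=0$ for $v\in V_h^0$. If $\3bar v\3bar=0$ then $s(v,v)=0$, forcing $v_0=v_b$ on $\partial T$ for every $T$, and $\|\nabla_w v\|_h=0$; by \eqref{prop_norm_1} also $\nabla v_0=0$ on each $T$, so $v_0$ is a constant on each element. Since $v_0=v_b$ on each edge and $v_b$ is single-valued across interior edges, $v_0$ has a common value on adjacent elements, hence $v_0$ is globally constant on the connected domain $\Omega$; the homogeneous boundary condition $v_b=0$ on $\partial\Omega$ then forces that constant to be $0$, whence $v_0\equiv0$ and $v_b\equiv0$, i.e. $v=0$.

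I expect the main obstacle to be the boundary term estimate and the bookkeeping with the exponent $\alpha$. The choice $\bq=\nabla v_0$ is only legitimate because $[Q_{k-1}(T)]^2$ sits inside $W_k(T)=[Q_{k-1,k},Q_{k,k-1}]^t$, which must be checked against the space definitions; and the trace inequality constant must be uniform over the shape-regular family, which is standard on rectangles but worth stating. The role of $\alpha\ge1$ is simply that $h^{-\alpha}\geq h^{-1}$ for $h\le 1$ (assuming, as is customary, the mesh is fine enough, or after rescaling), so the new stabilizer controls the ``natural'' $h_T^{-1}$-weighted jump seminorm; this is the only place the parameter enters the present lemma, and it is harmless. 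Everything else is routine application of Cauchy--Schwarz, Young's inequality, and summation over elements.
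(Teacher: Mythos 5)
Your proposal is correct and takes essentially the same route as the paper's proof: test the defining identity of the discrete weak gradient with $\bq=\nabla v_0$, control the resulting boundary term by a trace/inverse inequality together with the stabilizer (using $\alpha\ge 1$ and $h$ small, with the same implicit quasi-uniform identification of $h$ and $h_T$ that the paper uses), deduce \eqref{prop_norm_2} directly from the weight comparison, and obtain definiteness on $V_h^0$ from $\nabla_w v=0$, $v_0=v_b$, the single-valuedness of $v_b$, and the boundary condition. The only slip is the justification ``$\nabla v_0\in[Q_{k-1}(T)]^2$'', which is false for $Q_k$ elements (e.g.\ $\partial_x(xy^k)=y^k\notin Q_{k-1}$); however $\nabla v_0\in W_k(T)=[Q_{k-1,k},Q_{k,k-1}]^t$ holds directly from the definition of $W_k(T)$, so the choice $\bq=\nabla v_0$ is legitimate and the argument goes through unchanged.
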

\begin{proof}
It is easy to see that $\3bar\cdot\3bar$ is a semi-norm in $V_h^0$.
Hence, it suffices to show that $v=0$ whenever $\3bar v\3bar=0$.
Using \eqref{bi_form_1} and \eqref{bi_form_2} we have
\begin{equation*}
  0=\3bar v\3bar^2 = a_s(v,v) = (\nabla_w v,\nabla_w
v)_h+\sum_{T\in\mathcal{T}_h}h^{-\alpha}\langle v_0- v_b, v_0-
v_b\rangle_{\partial T}.
\end{equation*}
That is $\nabla_w v={\bf 0}$ on each $T\in\mathcal{T}_h$, and
$v_0|_{e}=v_b$ on each $e\in{\cal E}_h$. It follows from
$v_0|_{e}=v_b$ that
\begin{eqnarray}
0&=&(\nabla_w v,\bq)_T
=-(v_0,\nabla\cdot\bq)_T+\langle v_b,\bq\cdot\bn\rangle_{\partial
T}\nonumber
\\
&=&(\nabla v_0,\bq)_T-\langle v_0-v_b,\bq\cdot\bn\rangle_{\partial
T}=(\nabla v_0,\bq)_T,\label{norm_help_1}
\end{eqnarray}
for any $\bq\in W_k(T)$ and $\bn$ is the outward normal of
$\partial T$. Thus $\nabla v_0={\bf 0}$ on each $T\in\mathcal{T}_h$, and
$v_0$ is a constant on each $T$. Together with $v_0|_{e}=v_b$, we
conclude that $v$ is a constant on the global domain $\Omega$. The fact $v\in
V_h^0$ implies $v=0$. As a result, $\3bar\cdot\3bar$ is a norm in space
$V_h^0$.

For any $v=\{v_0,v_b\}\in V_h$, it follows from the definition of
weak gradient, the trace inequality, the inverse inequality, and the assumption $\alpha \geq 1$ that
\begin{eqnarray*}
 &&\sum_{T\in\mathcal{T}_h}\|\nabla v_0\|^2_T  =\sum_{T\in\mathcal{T}_h}(\nabla v_0,\nabla v_0)_T = \sum_{T\in\mathcal{T}_h}(\nabla_w v,\nabla
v_0)_T+\sum_{T\in\mathcal{T}_h}\langle v_0-v_b,\nabla
v_0\cdot\bn\rangle_{\partial T} \nonumber \\
   &\leq&  \left(\sum_{T\in\mathcal{T}_h}\|\nabla_w
v\|_T^2\right)^{\frac12}\left(\sum_{T\in\mathcal{T}_h}\|\nabla
v_0\|_{T}^2\right)^{\frac12}+\left(\sum_{T\in\mathcal{T}_h}h_T^{-\alpha}\|v_0-v_b\|^2_{\partial
T}\right)^{\frac12}\left(\sum_{T\in\mathcal{T}_h}h^{\alpha}\|\nabla
v_0\|^2_{\partial T}\right)^{\frac12}\nonumber \\
   &\leq& C\3barv\3bar\left(\sum_{T\in\mathcal{T}_h}\|\nabla v_0\|^2_T\right)^{\frac{1}{2}}.
\end{eqnarray*}
We obtain (\ref{prop_norm_1}). The inequality (\ref{prop_norm_2}) follows from that $h$ is small and $\alpha\ge 1$.
\end{proof}

We consider the following weak Galerkin method: find $u_h\in V_h^0$ such that
\begin{eqnarray}\label{wg_algorithm}
a_s(u_h,v)=(f,v_0), \quad\forall v\in V_h^0,
\end{eqnarray}
where $(f,v_0)=\sum_{T\in\mathcal{T}_h}(f,v_0)_T$.

\begin{remark}
The difference between the WG method \eqref{wg_algorithm} and the classical WG method in \cite{MuWangYe15} is that the stabilizer contains a fine-tune parameter $\alpha$. Later on, it will be shown that the parameter $\alpha$ plays an important role in the supercloseness analysis in Section 4. Numerical experiments in Section 7 also demonstrate this feature.
\end{remark}

\section{Interpolation operator}
This section introduces an interpolation operator that will be used later in the superconvergence analysis.

Let $-1=\zeta_0<\zeta_1<...<\zeta_k=1$ be $k+1$ Lobatto
points on the reference interval $\hat{e}=[-1,1]$, which are $k+1$ zeros
of the Lobatto polynomial $\omega_{k+1}$. We define a
Lagrange interpolation operator $\mathcal{I}: C^0(\hat e) \to P_k(\hat e)$ such that
\begin{eqnarray}\label{inter_opera_1d}
\mathcal{I} u(x)=\sum_{i=0}^{k}u(\zeta_i)l_i(x),~~~u\in C^0(\hat{e}),
\end{eqnarray}
where $l_i$, $i=0,1,\cdots,k$ are the Lagrange interpolation associated with
Lobatto points $\zeta_i$. The following properties of $l_i$ can be easily verified:
\begin{eqnarray}
\label{proper1}&&l_i(\zeta_j)=\delta_{ij},~~~i,j=0,1,...,k,
\\
\label{proper2}&&\sum_{i=0}^{k}l_i(x)=1,~~~\forall x\in\hat{e},
\\
\label{proper3}&&\sum_{i=0}^{k}(\zeta_i-x)^ml_i(x)=0,~~~1\leq m\leq
k.
\end{eqnarray}
We also recall an interpolation error representation in \cite{ZY14}.
\begin{lemma}\label{interlemma}
Let $u\in H^{k+2}(\hat{e})$, we have the following error equation
\begin{eqnarray*}
u(x)-\mathcal{I} u(x)=C\omega_{k+1}(x)u^{(k+1)}(x)+R(u,x),
\end{eqnarray*}
where C is a constant, $\omega_{k+1}$ is the Lobatto polynomial with
order $k+1$, and
\begin{eqnarray*}
R(u,x)=\sum_{i=0}^{k}l_i(x)\int^x_{\zeta_i}\frac{(\zeta_i-t)^{k+1}}{(k+1)!}u^{(k+2)}(t)dt
.
\end{eqnarray*}
\end{lemma}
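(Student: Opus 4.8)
The plan is to expand $u$ by Taylor's formula with integral remainder \emph{about the evaluation point $x$} rather than about the interpolation nodes, insert this into the definition \eqref{inter_opera_1d} of $\mathcal{I}u$, and then use the elementary identities \eqref{proper2}--\eqref{proper3} to collapse almost all of the resulting sum.

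Since $H^{k+2}(\hat e)\hookrightarrow C^{k+1}(\hat e)$ in one dimension and $u^{(k+2)}\in L^2(\hat e)\subset L^1(\hat e)$, Taylor's formula
\[
u(\zeta_i)=\sum_{m=0}^{k+1}\frac{u^{(m)}(x)}{m!}(\zeta_i-x)^m
+\int_x^{\zeta_i}\frac{(\zeta_i-t)^{k+1}}{(k+1)!}\,u^{(k+2)}(t)\,dt
\]
is valid at each node $\zeta_i$. Multiplying by $l_i(x)$, summing over $i$, and using $\int_x^{\zeta_i}=-\int_{\zeta_i}^x$ together with the definition of $R(u,x)$, we obtain
\[
\mathcal{I}u(x)=\sum_{m=0}^{k+1}\frac{u^{(m)}(x)}{m!}\Big(\sum_{i=0}^{k}l_i(x)(\zeta_i-x)^m\Big)-R(u,x).
\]
By \eqref{proper2} the $m=0$ term equals $u(x)$, and by \eqref{proper3} every term with $1\le m\le k$ vanishes, so
\[
u(x)-\mathcal{I}u(x)=-\frac{u^{(k+1)}(x)}{(k+1)!}\,\Sigma(x)+R(u,x),\qquad
\Sigma(x):=\sum_{i=0}^{k}l_i(x)(\zeta_i-x)^{k+1}.
\]

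It then remains to identify $\Sigma(x)$ as a constant multiple of $\omega_{k+1}(x)$. The key observation is that, for fixed $x$, the polynomial $q(s):=(s-x)^{k+1}$ in the variable $s$ is monic of degree $k+1$, so $\Sigma(x)=\sum_{i=0}^{k}q(\zeta_i)\,l_i(x)=(\mathcal{I}q)(x)$. The Lagrange interpolation error of $q$ at the $k+1$ nodes $\zeta_0,\dots,\zeta_k$ is exactly the nodal polynomial, $q(s)-(\mathcal{I}q)(s)=\prod_{i=0}^{k}(s-\zeta_i)$, since $q^{(k+1)}\equiv(k+1)!$. Evaluating at $s=x$ and using $q(x)=0$ yields $\Sigma(x)=-\prod_{i=0}^{k}(x-\zeta_i)$. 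Because the $\zeta_i$ are precisely the $k+1$ zeros of the degree-$(k+1)$ Lobatto polynomial $\omega_{k+1}$, we have $\prod_{i=0}^{k}(x-\zeta_i)=c_0\,\omega_{k+1}(x)$ for a constant $c_0$, hence $\Sigma(x)=-c_0\,\omega_{k+1}(x)$ and
\[
u(x)-\mathcal{I}u(x)=\frac{c_0}{(k+1)!}\,\omega_{k+1}(x)\,u^{(k+1)}(x)+R(u,x),
\]
which is the assertion with $C=c_0/(k+1)!$.

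I do not anticipate a genuine obstacle: the only step requiring a moment's thought is recognizing $\Sigma(x)$ as the value at $x$ of the Lagrange interpolant of the monic polynomial $s\mapsto(s-x)^{k+1}$, after which the nodal form of the interpolation error finishes the job; the hypothesis $u\in H^{k+2}(\hat e)$ is exactly what makes the integral remainder and the terms $u^{(k+1)}$, $u^{(k+2)}$ well defined, and everything else is bookkeeping with \eqref{proper2}--\eqref{proper3}.
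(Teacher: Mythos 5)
Your proof is correct. Note that the paper does not actually prove this lemma: it is recalled verbatim from the reference \cite{ZY14}, so there is no in-paper argument to compare against. Your derivation is the standard, self-contained route to this representation: Taylor-expand $u(\zeta_i)$ about the evaluation point $x$ with integral remainder (legitimate since $H^{k+2}(\hat e)\hookrightarrow C^{k+1}(\hat e)$ in one dimension and $u^{(k+2)}\in L^1(\hat e)$), multiply by $l_i(x)$ and sum, use \eqref{proper2} to recover $u(x)$ and \eqref{proper3} to kill the terms $1\le m\le k$, and then identify $\sum_{i}l_i(x)(\zeta_i-x)^{k+1}$ as the value at $x$ of the Lagrange interpolant of the monic polynomial $s\mapsto (s-x)^{k+1}$, whose interpolation error is exactly the nodal polynomial $\prod_{i=0}^{k}(s-\zeta_i)$; since the $\zeta_i$ are precisely the $k+1$ simple zeros of the degree-$(k+1)$ polynomial $\omega_{k+1}$, this gives $\Sigma(x)=-c_0\,\omega_{k+1}(x)$ and hence the stated identity with $C=c_0/(k+1)!$. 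The sign bookkeeping (converting $\int_x^{\zeta_i}$ to $-\int_{\zeta_i}^x$ to match the stated $R(u,x)$) is handled correctly, so I see no gap.
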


As shown in Lemma \ref{interlemma}, the interpolation
operator $\mathcal{I}$ preserves polynomials of degree up to $k$.
We composite the interpolation operators (\ref{inter_opera_1d}) in $x$- and $y$- directions to obtain an interpolation operator in the two dimensional domain $\mathcal{I}_h: C^0(\Omega)\rightarrow \mathcal{S}_h:= Q_k\cap C^0(\Omega)$ such that
\begin{equation}\label{interplation}
(\mathcal{I}_h u)|_T=\mathcal{I}_1\mathcal{I}_2 u|_T = \mathcal{I}_1\left(\sum_{i=0}^{k} u(x,\zeta^2_i)l_i(y)\right)
=\sum_{j=0}^{k}\sum_{i=0}^{k} u(\zeta^1_j,\zeta^2_i)l_i(y)l_j(x),
\end{equation}
where $\mathcal{I}_1,\mathcal{I}_2$ are the interpolation operators in
$x$-, $y$- directions, respectively. From \eqref{interplation}, it is easy to
prove $\mathcal{I}_h u\in C^0(\Omega)$. By Lemma \ref{interlemma} we have
the following estimates.
\begin{lemma}\cite{ZY14}
There exists a constant $C$ such that for any $u\in H^{k+2}(\Omega)$, the following inequality holds true
\begin{eqnarray}\label{estimates_inter}
(\nabla(u-\mathcal{I}_h u),\nabla v)\leq Ch^{k+1}|u|_{k+2}|v|_1,\quad
\forall v\in Q_k.
\end{eqnarray}
\end{lemma}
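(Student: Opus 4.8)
The plan is to reduce this two‑dimensional estimate to one‑dimensional superconvergence along mesh fibres, exploiting the tensor‑product structure in \eqref{interplation}. Writing $\mathcal{I}_h=\mathcal{I}_1\mathcal{I}_2=\mathcal{I}_2\mathcal{I}_1$ and, on each $T\in\mathcal{T}_h$, using the exact splitting
\[
u-\mathcal{I}_h u=(I-\mathcal{I}_1)u+(I-\mathcal{I}_2)u-(I-\mathcal{I}_1)(I-\mathcal{I}_2)u ,
\]
with $I$ the identity, and inserting it into $(\nabla(u-\mathcal{I}_h u),\nabla v)=\sum_{T}\int_T\nabla(u-\mathcal{I}_h u)\cdot\nabla v$, it suffices to bound each local contribution coming from $\partial_x$ and $\partial_y$ by $C h_T^{k+1}|u|_{k+2,T}\,|v|_{1,T}$; a Cauchy--Schwarz sum over $T$ then yields the stated bound.

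The essential term is a \emph{matched} contribution such as $\int_T\partial_x\big((I-\mathcal{I}_1)u\big)\,\partial_x v$. Along a horizontal fibre $T$ is an interval $[x_L,x_R]$ whose endpoints are the extreme Lobatto nodes $\zeta_0=-1$, $\zeta_k=1$; since $\mathcal{I}_1 u$ interpolates $u$ there, $(u-\mathcal{I}_1 u)$ vanishes at $x_L$ and $x_R$, so integration by parts in $x$ produces no boundary term and yields $-\int_T(u-\mathcal{I}_1 u)\,\partial_x^2 v$, where $\partial_x^2 v$ has $x$-degree at most $k-2$ (and vanishes identically when $k=1$). I then apply Lemma~\ref{interlemma} on each fibre, affinely rescaled to $[x_L,x_R]$: $(u-\mathcal{I}_1 u)(x,y)=C\,\omega^{h}_{k+1}(x)\,\partial_x^{k+1}u(x,y)+R_1(x,y)$, where the rescaled Lobatto polynomial satisfies $\|\omega^{h}_{k+1}\|_{L^\infty}\lesssim h_T^{k+1}$ and $\int_{x_L}^{x_R}\omega^{h}_{k+1}\,p\,dx=0$ for every $p\in P_{k-2}$ --- the degree-$(k+1)$ Lobatto polynomial being, up to a constant, $L_{k+1}-L_{k-1}$ with $L_n$ the Legendre polynomials, hence $L^2(-1,1)$-orthogonal to $P_{k-2}$, and this orthogonality is affine invariant. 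Subtracting from $\partial_x^{k+1}u(\cdot,y)$ its fibre average (which is annihilated against $\partial_x^2 v(\cdot,y)\in P_{k-2}$) and using the one-dimensional Poincar\'e inequality gains a factor $h_T$, while an inverse estimate $\|\partial_x^2 v\|_{L^2(T)}\lesssim h_T^{-1}\|\partial_x v\|_{L^2(T)}$ trades the spurious $|v|_{2,T}$ for $|v|_{1,T}$. The remainder $R_1$ is controlled in $L^\infty$ along fibres by the integral formula in Lemma~\ref{interlemma} and contributes at the same order; the matched $\partial_y$--$\partial_y$ term is symmetric.

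The remaining pieces need no cancellation. In a \emph{crossed} term such as $\int_T\partial_y\big((I-\mathcal{I}_1)u\big)\,\partial_y v$, the operator $\mathcal{I}_1$ acts only in $x$, so $\partial_y(u-\mathcal{I}_1 u)=(I-\mathcal{I}_1)\partial_y u$, and the standard one-dimensional interpolation estimate on fibres, squared and integrated in $y$, gives $\|(I-\mathcal{I}_1)\partial_y u\|_{L^2(T)}\lesssim h_T^{k+1}\|\partial_x^{k+1}\partial_y u\|_{L^2(T)}\le h_T^{k+1}|u|_{k+2,T}$, which already closes this term against $\|\partial_y v\|_{L^2(T)}\le|v|_{1,T}$ (and likewise for $\int_T\partial_x\big((I-\mathcal{I}_2)u\big)\,\partial_x v$). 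For the double-error piece $(I-\mathcal{I}_1)(I-\mathcal{I}_2)u$ I peel the interpolants one at a time: with $w=(I-\mathcal{I}_2)u$, the $H^1$-seminorm bound $\big|(I-\mathcal{I}_1)w\big|_{H^1}\lesssim h_T\|\partial_x^2 w\|_{L^2}$ on $x$-fibres, together with $\partial_x^2 w=(I-\mathcal{I}_2)\partial_x^2 u$ and $\|(I-\mathcal{I}_2)\partial_x^2 u\|_{L^2(T)}\lesssim h_T^{k}|u|_{k+2,T}$, gives $\|\nabla\big((I-\mathcal{I}_1)(I-\mathcal{I}_2)u\big)\|_{L^2(T)}\lesssim h_T^{k+1}|u|_{k+2,T}$, so pairing with $\nabla v$ again yields $C h_T^{k+1}|u|_{k+2,T}|v|_{1,T}$.

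I expect the matched term to be the main obstacle: making the superconvergence precise requires (i) tracking the affine rescaling from $[-1,1]$ to $[x_L,x_R]$ so that the Lobatto factor contributes exactly $h_T^{k+1}$ and retains its $P_{k-2}$-orthogonality; (ii) paying for the extra $x$-derivative on $v$ by an inverse inequality so the final bound keeps $|v|_1$ rather than $|v|_2$; and (iii) verifying that the remainder $R$ in Lemma~\ref{interlemma}, which a priori only involves $u^{(k+2)}$ integrated along a fibre, is still $O(h_T^{k+1})$ --- precisely where the hypothesis $u\in H^{k+2}$ is used sharply, via Cauchy--Schwarz on fibres. Everything else is routine scaling and interpolation.
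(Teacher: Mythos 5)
Your proposal is correct and takes essentially the approach the paper points to: the paper does not prove this lemma itself (it is quoted from \cite{ZY14}), but the intended argument is exactly what you reconstruct --- the fibrewise expansion of Lemma~\ref{interlemma}, the $P_{k-2}$-orthogonality of the (rescaled) Lobatto factor $\omega_{k+1}\propto L_{k+1}-L_{k-1}$ combined with integration by parts in the matched direction and an inverse estimate, plus standard commutation estimates for the crossed and double-error terms. Only cosmetic polish is needed, e.g.\ the $\partial_y$-component of the double-error term should be peeled the same way via $\partial_y(I-\mathcal{I}_1)(I-\mathcal{I}_2)u=(I-\mathcal{I}_1)(I-\mathcal{I}_2)\partial_y u$, which yields the same $O(h^{k+1})$ bound and does not affect correctness.
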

The definition of $\nabla_w$ given in (\ref{weak_grad}) and the fact that $\mathcal{I}_hv\in C^0$ yield the following lemma.
\begin{lemma}
The interpolation operator defined in (\ref{interplation}) satisfies
\begin{eqnarray}\label{continuous_prop}
(\nabla_w \mathcal{I}_hv ,\bq)_h=(\nabla \mathcal{I}_hv,\bq)_h,\quad \forall~ v\in C^0(\Omega),~\bq\in W_h,
\end{eqnarray}
where $(\cdot,\cdot)_h=\sum_{T\in\mathcal{T}_h}(\cdot,\cdot)_T$.
\end{lemma}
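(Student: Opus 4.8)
The plan is to verify the identity elementwise, i.e., to show that $(\nabla_w \mathcal{I}_h v, \bq)_T = (\nabla \mathcal{I}_h v, \bq)_T$ for each $T\in\mathcal{T}_h$ and each $\bq\in W_k(T)$, and then sum over $T$. Fix $T$, and write $w := \mathcal{I}_h v$. Since $w\in C^0(\Omega)$, the restriction $w|_T$ is smooth and its trace on $\partial T$ agrees with the boundary traces coming from neighboring elements; more to the point, as a weak function on $T$ we may take the natural associated representative $\{w_0, w_b\} = \{w|_T, w|_{\partial T}\}$, so that $w_0$ and $w_b$ are genuinely the restriction and the trace of the \emph{same} continuous function. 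The key is that for such a weak function the definition \eqref{weak_grad} of $\nabla_{w,k,T}$ collapses to the classical gradient.

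The main computation is an integration by parts exactly as in \eqref{norm_help_1}: for any $\bq\in W_k(T)$,
\begin{eqnarray*}
(\nabla_{w,k,T} w, \bq)_T
&=& -(w_0, \nabla\cdot\bq)_T + \langle w_b, \bq\cdot\bn\rangle_{\partial T}
\\
&=& (\nabla w_0, \bq)_T - \langle w_0, \bq\cdot\bn\rangle_{\partial T} + \langle w_b, \bq\cdot\bn\rangle_{\partial T}
\\
&=& (\nabla w_0, \bq)_T - \langle w_0 - w_b, \bq\cdot\bn\rangle_{\partial T}
= (\nabla w_0, \bq)_T,
\end{eqnarray*}
where the last equality uses $w_0|_{\partial T} = w_b$, which holds precisely because $\mathcal{I}_h v\in C^0(\Omega)$ and hence the interior trace matches the boundary component. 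Since $\nabla_{w,k,T} w\in W_k(T)$ is by definition the unique element of $W_k(T)$ satisfying the left-hand relation for all test functions in $W_k(T)$, and since $\nabla w_0 = \nabla(\mathcal{I}_h v)|_T$ is a polynomial vector field, the displayed chain shows it also satisfies the defining relation; hence $\nabla_{w,k,T}(\mathcal{I}_h v|_T)$ and $\nabla(\mathcal{I}_h v)|_T$ have the same $L^2(T)$-inner product against every $\bq\in W_k(T)$. Summing over $T\in\mathcal{T}_h$ gives \eqref{continuous_prop}.

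The only point requiring any care—and the reason the hypothesis $\mathcal{I}_h v\in C^0(\Omega)$ is invoked rather than merely $v\in C^0(\Omega)$—is the identification $w_0|_{\partial T} = w_b$ that makes the boundary term $\langle w_0 - w_b, \bq\cdot\bn\rangle_{\partial T}$ vanish; without continuity of $\mathcal{I}_h v$ there would be a genuine jump contribution. This is exactly the fact established after \eqref{interplation} (that $\mathcal{I}_h u\in C^0(\Omega)$), so no new work is needed there. Everything else is a one-line integration by parts, valid since $\mathcal{I}_h v|_T\in Q_k(T)\subset H^1(T)$, together with the uniqueness clause in the definition of the discrete weak gradient.
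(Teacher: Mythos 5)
Your proof is correct and is exactly the argument the paper intends: the paper gives no separate proof, stating only that the identity follows from the definition \eqref{weak_grad} together with $\mathcal{I}_h v\in C^0(\Omega)$, and your elementwise integration by parts with the cancellation $\langle w_0-w_b,\bq\cdot\bn\rangle_{\partial T}=0$ is the same computation already displayed in \eqref{norm_help_1}. No gaps; the remark that $\nabla(\mathcal{I}_h v)|_T$ actually lies in $W_k(T)$ is more than the stated equality of inner products requires, but it is harmless.
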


\section{Analysis of supercloseness}
In this section, we derive an error estimate for $\3bar \mathcal{I}_h
u-u_h\3bar$, where $u_h$ is the solution of the WG method (\ref{wg_algorithm}) and $\mathcal{I}_hu$ is the interpolation of the exact solution of problem
(\ref{model}).

\begin{theorem}\label{order_est}
Let $u\in H^{k+2}(\Omega)$ be the solution of (\ref{model}), and $u_h\in V_h$ be the solution of WG method  (\ref{wg_algorithm}).
The following error estimate holds
\begin{equation}\label{superclose}
\3bar \mathcal{I}_h u-u_h\3bar\leq Ch^{\min\{k+1,k+\frac{\alpha-1}{2}\}}\|u\|_{k+2}.
\end{equation}
\end{theorem}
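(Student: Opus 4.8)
The plan is to set $e_h=\{e_{h,0},e_{h,b}\}:=\mathcal{I}_hu-u_h$ and to estimate $\3bar e_h\3bar^2=a_s(e_h,e_h)$ through an error equation. Since $u=0$ on $\partial\Omega$ and $\mathcal{I}_hu\in C^0(\Omega)$, the interpolant, regarded as the weak function $\{\mathcal{I}_hu|_T,\mathcal{I}_hu|_e\}$, belongs to $V_h^0$; hence $e_h\in V_h^0$ and $a_s(u_h,e_h)=(f,e_{h,0})$ by \eqref{wg_algorithm}. Thus
\[
\3bar e_h\3bar^2=a_s(\mathcal{I}_hu,e_h)-(f,e_{h,0})=(\nabla_w\mathcal{I}_hu,\nabla_we_h)_h+s(\mathcal{I}_hu,e_h)-(f,e_{h,0}).
\]
The stabilizer term drops out: since $\mathcal{I}_hu$ is globally continuous, its interior and edge components coincide on every $\partial T$, so $s(\mathcal{I}_hu,e_h)=0$ by \eqref{bi_form_1}. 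For the gradient term, \eqref{continuous_prop} gives $(\nabla_w\mathcal{I}_hu,\nabla_we_h)_h=(\nabla\mathcal{I}_hu,\nabla_we_h)_h$, and since $\nabla\mathcal{I}_hu|_T\in[Q_{k-1,k},Q_{k,k-1}]^t=W_k(T)$ we may insert it as a test function in \eqref{weak_grad} for $e_h$ and integrate by parts on each element. Handling $(f,e_{h,0})$ by testing $-\Delta u=f$ against $e_{h,0}$, integrating by parts elementwise, and using that $\nabla u$ is single-valued across interior edges while $e_{h,b}=0$ on $\partial\Omega$ (so the terms paired with $e_{h,b}$ telescope to zero), one arrives at the error equation
\[
\3bar e_h\3bar^2=\big(\nabla(\mathcal{I}_hu-u),\nabla e_{h,0}\big)_h-\sum_{T\in\mathcal{T}_h}\big\langle e_{h,0}-e_{h,b},\,\nabla(\mathcal{I}_hu-u)\cdot\bn\big\rangle_{\partial T}=:T_1-T_2.
\]

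For $T_1$ I would invoke the Lobatto superconvergence estimate \eqref{estimates_inter} with $v=e_{h,0}\in Q_k$, together with \eqref{prop_norm_1}, to get $|T_1|\le Ch^{k+1}|u|_{k+2}\3bar e_h\3bar$. The term $T_2$ carries the $\alpha$-dependence and is the crux. Applying the Cauchy--Schwarz inequality on each $\partial T$ and writing $\|e_{h,0}-e_{h,b}\|_{\partial T}=h^{\alpha/2}\big(h^{-\alpha/2}\|e_{h,0}-e_{h,b}\|_{\partial T}\big)$, the first factor is absorbed into the stabilization energy: $\big(\sum_{T\in\mathcal{T}_h}\|e_{h,0}-e_{h,b}\|_{\partial T}^2\big)^{1/2}=h^{\alpha/2}s(e_h,e_h)^{1/2}\le h^{\alpha/2}\3bar e_h\3bar$. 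For the second factor, a trace inequality on $\partial T$ combined with the standard (non-superconvergent) interpolation bounds $\|\mathcal{I}_hu-u\|_{1,T}\le Ch_T^{k}\|u\|_{k+1,T}$ and $\|\mathcal{I}_hu-u\|_{2,T}\le Ch_T^{k-1}\|u\|_{k+1,T}$ yields $\|\nabla(\mathcal{I}_hu-u)\|_{\partial T}\le Ch_T^{k-1/2}\|u\|_{k+1,T}$, hence $\big(\sum_{T\in\mathcal{T}_h}\|\nabla(\mathcal{I}_hu-u)\|_{\partial T}^2\big)^{1/2}\le Ch^{k-1/2}\|u\|_{k+2}$. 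Therefore $|T_2|\le Ch^{k+(\alpha-1)/2}\|u\|_{k+2}\3bar e_h\3bar$. Combining and dividing by $\3bar e_h\3bar$ (the case $\3bar e_h\3bar=0$ being trivial) gives $\3bar e_h\3bar\le C\big(h^{k+1}+h^{k+(\alpha-1)/2}\big)\|u\|_{k+2}$, which is \eqref{superclose}.

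The main obstacle is the boundary term $T_2$. One cannot do better than absorbing $\|e_{h,0}-e_{h,b}\|_{\partial T}$ into the weakly controlled stabilization energy, which costs a factor $h^{\alpha/2}$ rather than the $h^{1/2}$ that the standard ($\alpha=1$) stabilizer would supply; moreover $\|\nabla(\mathcal{I}_hu-u)\|_{\partial T}$ is genuinely only $O(h^{k-1/2})$, one full power short of the interior superconvergence exploited in $T_1$, because $\mathcal{I}_h$ reproduces $Q_k\supset P_k$ but not $P_{k+1}$ and no Lobatto-point cancellation survives on the edges. This is precisely why the rate is $h^{k+(\alpha-1)/2}$ rather than $h^{k+1}$ unless $\alpha\ge 3$, and why $\alpha>1$ is essential for supercloseness. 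The only genuinely subtle bookkeeping is to make sure the identity $s(\mathcal{I}_hu,e_h)=0$ and the admissibility $\nabla\mathcal{I}_hu|_T\in W_k(T)$ are used correctly, and that the jump terms against $e_{h,b}$ from the two elementwise integrations by parts cancel exactly.
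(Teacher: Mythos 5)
Your proposal is correct and follows essentially the same route as the paper: the identical error equation obtained from the scheme, the continuity identity \eqref{continuous_prop} with $\nabla\mathcal{I}_hu\in W_k(T)$ as test function, the superconvergent estimate \eqref{estimates_inter} together with \eqref{prop_norm_1} for the interior term, and the $h^{\pm\alpha/2}$-weighted Cauchy--Schwarz split with a trace/interpolation bound for the boundary term. Your writeup merely makes explicit a few steps the paper leaves implicit (e.g.\ $s(\mathcal{I}_hu,\cdot)=0$ and the $O(h^{k-1/2})$ edge estimate), so no further changes are needed.
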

\begin{proof}
Since $Q_k\subset V_h$, then $\3bar \mathcal{I}_h u-u_h\3bar$ is well-defined.
Multiplying both sides of (\ref{model}) by $v_0$, and using integration by parts, we have
\begin{equation}\label{prop_u}
\begin{aligned}
(f,v_0)=&\sum_{T\in\mathcal{T}_h}(-\Delta u,v_0)_T
=\sum_{T\in\mathcal{T}_h}(\nabla u,\nabla
v_0)_T-\sum_{T\in\mathcal{T}_h}\langle\nabla u\cdot\bn,
v_0\rangle_{\partial T}
\\
=&\sum_{T\in\mathcal{T}_h}(\nabla u,\nabla
v_0)_T-\sum_{T\in\mathcal{T}_h}\langle\nabla u\cdot\bn,
v_0-v_b\rangle_{\partial T}.
\end{aligned}
\end{equation}
Here we use the facts that the normal component $\nabla u\cdot \bn$ of the flux is continuous on all interior edges
and $v_b|_{\partial\Omega}=0$.

From (\ref{wg_algorithm}), (\ref{continuous_prop}), (\ref{prop_u}),
the Cauchy-Schwarz inequality, (\ref{estimates_inter}),
(\ref{prop_norm_1}), the property of interpolation operator
$\mathcal{I}_h$, and $\alpha\geq1$ we obtain
\begin{eqnarray*}
&&\3bar\mathcal{I}_h u-u_h\3bar^2=a_s(\mathcal{I}_h u-u_h,\mathcal{I}_h u-u_h)
\\
&=&a_s(\mathcal{I}_hu,\mathcal{I}_hu-u_h)-a_s(u_h,\mathcal{I}_hu-u_h)
\\
&=&\sum_{T\in\mathcal{T}_h}(\nabla_w\mathcal{I}_h u,\nabla_w (\mathcal{I}_h
u-u_h))_T-(f,\mathcal{I}_hu-u_0)
\\
&=&\sum_{T\in\mathcal{T}_h}(\nabla \mathcal{I}_h u,\nabla_w (\mathcal{I}_h
u-u_h))_T-\sum_{T\in\mathcal{T}_h}(\nabla u,\nabla(\mathcal{I}_hu-u_0))_T
\\
&&+\sum_{T\in\mathcal{T}_h}\langle\nabla u\cdot \bn,\mathcal{I}_h u -u_0-(\mathcal{I}_h
u-u_b)\rangle_{\partial T}
\\
&=&\sum_{T\in\mathcal{T}_h}(\nabla (\mathcal{I}_h u-u),\nabla (\mathcal{I}_h u-u_0))_T
\\
&&-\sum_{T\in\mathcal{T}_h}\langle\nabla (\mathcal{I}_h u-u)\cdot \bn,\mathcal{I}_h u -u_0-(\mathcal{I}_h
u-u_b)\rangle_{\partial T}
\\
&\leq&\sum_{T\in\mathcal{T}_h}(\nabla(\mathcal{I}_h u-u),\nabla (\mathcal{I}_h u-u_0))_T
\\
&&+\Big(\sum_{T\in\mathcal{T}_h}h_T^{\alpha}\|\nabla(\mathcal{I}_h
u-u)\|^2_{\partial
T}\Big)^{\frac12}\Big(\sum_{T\in\mathcal{T}_h}h_T^{-\alpha}\|\mathcal{I}_h u-u_0-(\mathcal{I}_h
u-u_b)\|^2_{\partial T}\Big)^{\frac12}
\\
&\leq&Ch^{\min\{k+1,k+\frac{\alpha-1}{2}\}}\|u\|_{k+2}\3bar \mathcal{I}_h
u-u_h\3bar.
\end{eqnarray*}
Here, we have used the fact that $\nabla (\mathcal{I}_hu)\in W_h$.
\end{proof}

\begin{remark}
The estimate \eqref{superclose} shows that the WG solution $u_h$ is superclose to the interpolation $\mathcal{I}_hu$ when $\alpha > 1$. It reaches the maximum rate of convergence when $\alpha = 3$. Further increasing the value of $\alpha$ will not improve the rate of convergence.
\end{remark}

\section{PPR for WG solutions}
 In this section, we introduce a gradient recovery operator $G_h$ onto space $S_h\times S_h$, with $S_h:=\{v\in C^0(\Omega): v|_T\in P_k(T),T\in\mathcal{T}_h\}$, on the rectangular mesh $\mathcal{T}_h$.
 For a WG solution $u_h$ in (\ref{wg_algorithm}), we define $G_hu_h$ on the following three types of mesh nodes  \cite{ZN05}: vertex, edge node, and internal node, see Fig. \ref{fig:three_nodes}.
\begin{figure}[!htp]
\begin{center}
\subfigure[Vertex]{
\begin{tikzpicture}
    \path (0,0) coordinate (A1);
    \path (0,1) coordinate (A2);
    \path (0,2) coordinate (A3);
    \path (1,0) coordinate (A4);
    \path (2,0) coordinate (A5);
    \path (1,1) coordinate (A6);
    \draw (A2) -- (A1) -- (A4);
    \draw (A2) -- (A6) -- (A4);
    \filldraw[black] (A1) circle(0.05);
    \filldraw[black] (A2) circle(0.05);
    \filldraw[black] (A4) circle(0.05);
    \filldraw[black] (A6) circle(0.05);
\end{tikzpicture}
}
~~~~~
\subfigure[Edge node]{
\begin{tikzpicture}
    \path (0,0) coordinate (A1);
    \path (0,1) coordinate (A2);
    \path (0,2) coordinate (A3);
    \path (1,0) coordinate (A4);
    \path (2,0) coordinate (A5);
    \path (1,1) coordinate (A6);
    \path (0,0.5) coordinate (A7);
    \path (0.5,0) coordinate (A8);
    \path (0.5,1) coordinate (A9);
    \path (1,0.5) coordinate (A10);
    \draw (A2) -- (A1) -- (A4);
    \draw (A2) -- (A6) -- (A4);
    \filldraw[black] (A7) circle(0.05);
    \filldraw[black] (A8) circle(0.05);
    \filldraw[black] (A9) circle(0.05);
    \filldraw[black] (A10) circle(0.05);
\end{tikzpicture}
}
~~~~~
\subfigure[Internal node]{
\begin{tikzpicture}
    \path (0,0) coordinate (A1);
    \path (0,1) coordinate (A2);
    \path (0,2) coordinate (A3);
    \path (1,0) coordinate (A4);
    \path (2.3,0) coordinate (A5);
    \path (1,1) coordinate (A6);
    \path (0.5,0.5) coordinate (A7);
    \draw (A2) -- (A1) -- (A4);
    \draw (A2) -- (A6) -- (A4);
    \filldraw[black] (A7) circle(0.05);
\end{tikzpicture}
}
\end{center}
\caption{Three types of nodes.}
\label{fig:three_nodes}
\end{figure}
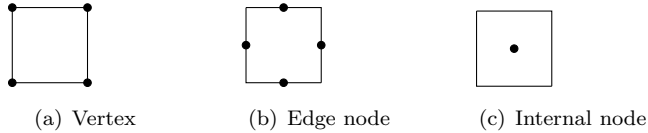

\subsection{Vertex patch}
We define a patch $K_{z}$ for every vertex $z$ by
 \begin{equation*}
 K_{z}=\{T\in\mathcal{T}_h: \bar T\cap\{z\}\neq\emptyset\}
\end{equation*}
be the union of the elements in the first layer around $z$. There can be two types of vertices. The first type is the interior vertex $z\in\Omega$, and the other one is the boundary vertex $z\in\partial \Omega$, see Fig. \ref{fig:two_vertiese} for an illustration.
  \begin{figure}[!htp]
\begin{center}
\subfigure[Interior vertex]{
\begin{tikzpicture}
    \path (0,0) coordinate (A1);
    \path (0,1) coordinate (A2);
    \path (0,2) coordinate (A3);
    \path (1,0) coordinate (A4);
    \path (2.3,0) coordinate (A5);
    \path (1,1) coordinate (A6);
    \path (1.2,1.2) coordinate (A7);
    \draw (A3) -- (A1) -- (A5);
    \draw (A2) -- (A6) -- (A4);
    \filldraw[black] (A6) circle(0.05);
    \draw node[right] at (A7) {$\Omega$};
\end{tikzpicture}
}$\qquad$$\qquad$
\subfigure[Boundary vertex]{
\begin{tikzpicture}
    \path (0,0) coordinate (A1);
    \path (0,1) coordinate (A2);
    \path (0,2) coordinate (A3);
    \path (1,0) coordinate (A4);
    \path (2,0) coordinate (A5);
    \path (1,1) coordinate (A6);
    \path (2.7,0) coordinate (A7);
    \path (1.2,1.2) coordinate (A8);
    \draw (A3) -- (A1) -- (A5);
    \draw (A2) -- (A6) -- (A4);
    \draw (A7);
    \filldraw[black] (A2) circle(0.05);
    \filldraw[black] (A1) circle(0.05);
    \filldraw[black] (A4) circle(0.05);
    \draw node[right] at (A8) {$\Omega$};
\end{tikzpicture}
}
\end{center}
\caption{Two kinds of vertices.}
\label{fig:two_vertiese}
\end{figure}
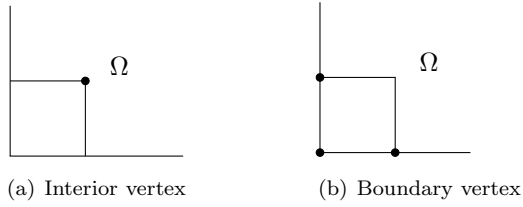

Before we introduce the PPR scheme, we need to clarify some notations.
\begin{itemize}
\item $\mathcal{N}$: All nodes in $\bar{\Omega}$. They could be vertices, edge nodes, or internal  nodes.
\item $\mathcal{N}(T)$: All mesh nodes in $\bar{T}$.
\item $\mathcal{N}_i$: $\mathcal{N}_i=\{z_{i,j}\}_{j=1}^{n_{z_i}}$ is the set of all mesh nodes in $\overline{K_{z_i}}$. Here, $n_{z_i}$ is the number of the nodes. For the linear element all nodes are vertices. For quadratic and higher-order elements, there are vertices, edge nodes, and internal nodes.
\item $\mathcal{M}^0$: All interior vertices in $\Omega$.
\item $\mathcal{M}^0(T)$: All interior vertices in $\bar{T}\cap\Omega$.
\item $\mathcal{M}^0_i$: $\mathcal{M}_i^0=\{z_{i,j}\}_{j=1}^{m_{z_i}}$ is the set of all interior vertices in $\overline{K_{z_i}}$. Denoted by $m_{z_i}$ the number of nodes in $\mathcal{M}_i^0$.
\end{itemize}

\subsection{The reformulated value $\bar{u}_h$}
 In order to obtain the recovered gradient $G_hu_h(z_i)$, we need to use the values of $u_h$ at mesh nodes in $\mathcal{N}_i$ to get an approximation $p_{k+1}\in P_{k+1}(K_{z_i})$ in the least-square sense. However, on a vertex or an edge node, the WG solution $u_h$ may have more than one value, as illustrated in Fig. \ref{fig:edge_node}. As a result, we must redefine the value of $u_h$ at those nodes.

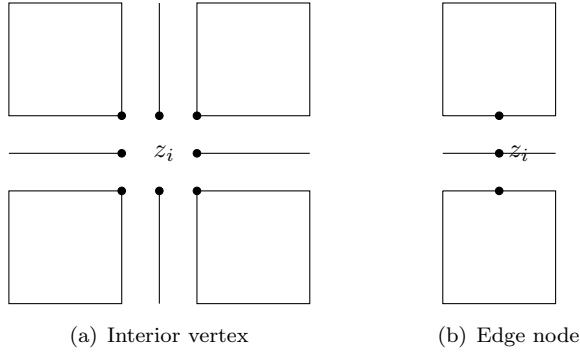
\begin{figure}[!htp]
\begin{center}
\subfigure[Interior vertex]{
\begin{tikzpicture}
    \path (0,0.5) coordinate (A11);
    \path (0,2) coordinate (A12);
    \path (0.5,0) coordinate (A21);
    \path (2,0) coordinate (A22);
    \path (0,-0.5) coordinate (A31);
    \path (0,-2) coordinate (A32);
    \path (-0.5,0) coordinate (A41);
    \path (-2,0) coordinate (A42);
    \draw (A11) -- (A12);
    \draw (A21) -- (A22);
    \draw (A31) -- (A32);
    \draw (A41) -- (A42);
    \path (0.5,0.5) coordinate (B11);
    \path (0.5,2) coordinate (B12);
    \path (2,2) coordinate (B13);
    \path (2,0.5) coordinate (B14);

    \path (-0.5,0.5) coordinate (B21);
    \path (-0.5,2) coordinate (B22);
    \path (-2,2) coordinate (B23);
    \path (-2,0.5) coordinate (B24);

    \path (-0.5,-0.5) coordinate (B31);
    \path (-2,-0.5) coordinate (B32);
    \path (-2,-2) coordinate (B33);
    \path (-0.5,-2) coordinate (B34);

    \path (0.5,-0.5) coordinate (B41);
    \path (2,-0.5) coordinate (B42);
    \path (2,-2) coordinate (B43);
    \path (0.5,-2) coordinate (B44);

    \draw (B11) -- (B12) -- (B13) -- (B14) -- (B11);
    \draw (B21) -- (B22) -- (B23) -- (B24) -- (B21);
    \draw (B31) -- (B32) -- (B33) -- (B34) -- (B31);
    \draw (B41) -- (B42) -- (B43) -- (B44) -- (B41);
    \path (-0.2,0) coordinate (a);
    \filldraw[black] (A11) circle(0.05);
    \filldraw[black] (A21) circle(0.05);
    \filldraw[black] (A31) circle(0.05);
    \filldraw[black] (A41) circle(0.05);
    \filldraw[black] (B11) circle(0.05);
    \filldraw[black] (B21) circle(0.05);
    \filldraw[black] (B31) circle(0.05);
    \filldraw[black] (B41) circle(0.05);
    \draw node[right] at (a) {$z_{i}$};
\end{tikzpicture}
}$\qquad\qquad$
\subfigure[Edge node]{
\begin{tikzpicture}
    \path (0.5,0) coordinate (A11);
    \path (2,0) coordinate (A12);
    \draw (A11) -- (A12);
    \path (0.5,0.5) coordinate (B11);
    \path (0.5,2) coordinate (B12);
    \path (2,2) coordinate (B13);
    \path (2,0.5) coordinate (B14);

    \path (0.5,-0.5) coordinate (B41);
    \path (2,-0.5) coordinate (B42);
    \path (2,-2) coordinate (B43);
    \path (0.5,-2) coordinate (B44);

    \draw (B11) -- (B12) -- (B13) -- (B14) -- (B11);
    \draw (B41) -- (B42) -- (B43) -- (B44) -- (B41);
    \path (1.25,0) coordinate (a);
     \path (1.25,0.5) coordinate (a1);
      \path (1.25,-0.5) coordinate (a2);

      \path (2.25,0) coordinate (a3);
      \draw (a3);
    \filldraw[black] (a) circle(0.05);
    \filldraw[black] (a1) circle(0.05);
    \filldraw[black] (a2) circle(0.05);
    \draw node[right] at (a) {$z_{i}$};
\end{tikzpicture}
}
\end{center}
\caption{The distribution of $u_h$ on a mesh node $z_{i}$.}
\label{fig:edge_node}
\end{figure}

 For any node $z_i\in \mathcal{N}$, denote by $\{{u}_h^j(z_i)\}_{j=1}^{l_{z_i}}$ the possible values for $u_h$ at $z_i$ where $l_{z_i}$ is the number of these values. Note that $u_h^j(z_i)$ might be the value of the interior part $u_0$ or the boundary part $u_b$ of the weak function $u_h=\{u_0,u_b\}$ at point $z_i$. We define a function $\bar{u}_h$ such that the value of $\bar{u}_h$ at $z_i$ is given by
 \begin{eqnarray}\label{refor_uh}
 \bar{u}_h(z_i)=\sum_{j=1}^{l_{z_i}}\alpha_j{u}_h^j(z_i),\quad \alpha_j\geq0,\quad \sum_{j=1}^{l_{z_i}}\alpha_j=1.
 \end{eqnarray}
Moreover, we require $\bar{u}_h\in \mathcal{S}_h$ to be a function satisfying
 \begin{eqnarray}\label{req_baru_1}
 \bar{u}_h=\sum_{z_i\in\mathcal{N}}\bar{u}_h(z_i)l_i,
 \end{eqnarray}
 where $l_i$ is the Lagrange basis associated with $z_i$.
It can be proved that the function $\bar{u}_h$ satisfies the following lemma.

\begin{lemma}\label{jump_comb}
Given $u_h=\{u_0,u_b\}\in V_h$, let $\bar{u}_h$ be defined as (\ref{refor_uh})-(\ref{req_baru_1}).
 Assume that $z_i\in\mathcal{M}^0$ is an interior vertex, $K_{z_i}$ is the patch for $z_i$, and $\mathcal{N}_i=\{z_{i,j}\}_{j=1}^{n_{z_i}}$ is the set of the nodes in $\overline{K_{z_{i}}}$, where $n_{z_i}$ is the number of the elements in $\mathcal{N}_i$. Then for $T\subset K_{z_i}$, $z_{i,j}\in \bar{T}$,  the following properties hold.
\begin{enumerate}[(i)]
\item $(\bar{u}_h-u_0)|_{T}(z_{i,j})$ can be written as the jump of $u_h$ at $z_{i,j}$, if $z_{i,j}\in\mathcal{N}_i$ is a vertex or an edge node on $\partial T$,
\item$(\bar{u}_h-u_0)|_{T}(z_{i,j})=0$, if $z_{i,j}\in\mathcal{N}_i$ is an internal mesh node in $T$.
\end{enumerate}
\end{lemma}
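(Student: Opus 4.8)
The plan is to reduce both assertions to one pointwise identity and then to distinguish the position of $z_{i,j}$ relative to $T$. First I would record the reduction. Writing $\bar u_h=\sum_{z_p\in\mathcal N}\bar u_h(z_p)\,l_p$ as in \eqref{req_baru_1}, I would use that each nodal basis function $l_p$ of $\mathcal S_h$ vanishes identically on every element whose closure does not contain $z_p$ (the standard compact-support property of the tensor-product Lagrange basis), so that on $T$ only the functions $l_p$ with $z_p\in\mathcal N(T)$ contribute; combining this with the Lagrange property $l_p(z_q)=\delta_{pq}$ gives, for every node $z_{i,j}\in\bar T$,
\[
(\bar u_h-u_0)|_T(z_{i,j}) = \bar u_h(z_{i,j}) - u_0|_T(z_{i,j}).
\]
Everything then comes down to identifying the nodal value $\bar u_h(z_{i,j})$ prescribed by \eqref{refor_uh}.

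For part (ii), suppose $z_{i,j}$ is an internal mesh node of $T$. Then it lies in the open set $T$, hence belongs to no edge of $\E_h$ and to no other element; therefore the only value the weak function $u_h=\{u_0,u_b\}$ assumes at $z_{i,j}$ is $u_0|_T(z_{i,j})$, i.e. $l_{z_{i,j}}=1$ and the sole admissible value is $u_h^1(z_{i,j})=u_0|_T(z_{i,j})$, forcing $\alpha_1=1$ in \eqref{refor_uh}. Consequently $\bar u_h(z_{i,j})=u_0|_T(z_{i,j})$, and the identity above yields $(\bar u_h-u_0)|_T(z_{i,j})=0$.

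For part (i), let $z_{i,j}$ be a vertex of $T$ or an edge node lying on $\partial T$. Using the convexity constraint $\sum_{l=1}^{l_{z_{i,j}}}\alpha_l=1$ I would write
\[
(\bar u_h-u_0)|_T(z_{i,j}) = \sum_{l=1}^{l_{z_{i,j}}}\alpha_l\bigl(u_h^l(z_{i,j})-u_0|_T(z_{i,j})\bigr).
\]
Each admissible value $u_h^l(z_{i,j})$ is either $u_0|_{T'}(z_{i,j})$ for some element $T'\subset K_{z_i}$ sharing the node $z_{i,j}$ with $T$, or $u_b(z_{i,j})$ for some edge $e$ through $z_{i,j}$; in either case the difference $u_h^l(z_{i,j})-u_0|_T(z_{i,j})$ is the value at $z_{i,j}$ of a jump of $u_h$ across an edge incident to $z_{i,j}$ (namely $(u_0|_{T'}-u_0|_T)(z_{i,j})$ or $(u_b-u_0|_T)(z_{i,j})$). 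Hence $(\bar u_h-u_0)|_T(z_{i,j})$ is a convex combination of such jumps, which is the asserted representation; in particular it is bounded by the jumps of $u_h$ at $z_{i,j}$, which is what is needed for the subsequent error analysis.

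I expect the main work to be the bookkeeping in part (i): one must enumerate, for each kind of node on the rectangular mesh — the central vertex $z_i$ itself, the remaining vertices of the patch, and the edge nodes on $\partial T$ — precisely which elements and which edges of the patch meet at $z_{i,j}$, check that every admissible value $u_h^l(z_{i,j})$ occurring in \eqref{refor_uh} has been matched to a jump across a suitable edge, and treat separately the exceptional situation in which a node $z_{i,j}$ of the patch lies on $\partial\Omega$ (so that $u_b$ there is the prescribed boundary datum), in which case $u_b-u_0|_T$ is still a well-defined jump. Since the geometry is that of axis-parallel rectangles this enumeration is routine once the reduction identity is in place, and the only structural ingredients are the compact support of the nodal basis and the constraint $\sum_l\alpha_l=1$.
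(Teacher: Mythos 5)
Your argument is correct and follows essentially the same route as the paper's own proof: you use $\sum_{l}\alpha_l=1$ to write $(\bar u_h-u_0)|_T(z_{i,j})$ as a convex combination of the differences $u_h^l(z_{i,j})-u_0|_T(z_{i,j})$ and identify these with jumps of $u_h$ at the node, with part (ii) immediate because an internal node carries only the single value $u_0|_T$. The one refinement worth noting (the paper is equally terse here) is that a term of the form $u_0|_{T'}(z_{i,j})-u_0|_T(z_{i,j})$ is not itself a single WG jump but should be telescoped through the $u_b$ values on the edges meeting at $z_{i,j}$, so that every contribution is expressed in the form $u_0|_e-u_b|_e$ on edges through the node, which is the form actually controlled by the stabilizer in the subsequent estimate.
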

\begin{proof}
Without loss of generality, we consider an interior vertex $z_{i,1}$.
Assume that $u^1_0,...,u^{4}_0$, $u^{5}_b,...,u^{8}_b$ are the values of $u_h$ at $z_{i,1}$, see the left plot in Fig. \ref{fig:edge_node}. Let $\bar{u}_h(z_{i,1})=\sum_{s=1}^{4}\alpha_su^s_0+\sum_{t=5}^{8}\alpha_tu^t_b$ and $u_0|_T(z_{i,1})=u^1_0$. Then, we have
\begin{eqnarray*}
(\bar{u}_h-u_0)|_{T}(z_{i,1})=&&\sum_{s=1}^{4}\alpha_s(u^s_0-u^1_0)+\sum_{t=5}^{8}\alpha_t(u^t_b-u^1_0).
\end{eqnarray*}
This shows that $(\bar{u}_h-u_0)|_{T}(z_{i,1})$ consists of the jump of $u_h$ at $z_{i,1}$. Furthermore, it can be written as $u_0|_e(z_{i,1})-u_b|_e(z_{i,1})$ where $u_0$ and $u_b$ share the edge $e$ and $z_{i,1}$ lies on the edge $e$.

For boundary vertices and edge nodes, the proof is similar. For internal nodes, the property $(ii)$ follows directly from the definition of $\bar{u}_h$.
\end{proof}

\subsection{The PPR operator $G_h$}\label{PPR-operator}
Recall that the function $\bar{u}_h$ is defined to have a unifed value at each node. Therefore we can apply PPR scheme to construct the gradient recovery operator $G_h$. We consider the following four cases.

{\bf Case 1.} For each interior vertex $z_i\in \mathcal{M}^0$, we fit a polynomial in $P_{k+1}(K_{z_i})$ to the redefined WG solution $\bar{u}_h(z_{i,j}),j=1,...,n_{z_i}$  by the least-square method. Let $(x,y)$ be the local coordinates with respect to the origin $z_i$. The fitting polynomial is defined as
 \begin{eqnarray}\label{p_k+1}
 p_{k+1}(x,y;z_i)=\bP\ba=\hat{\bP}\hat{\ba},
 \end{eqnarray}
 where
\begin{eqnarray*}
&&\bP=\Big(1,x,y,...,x^{k+1},x^ky,...,y^{k+1}\Big),
\\
&&\hat{\bP}=\Big(1,\hat{x},\hat{y},...,\hat{x}^{k+1},\hat{x}^k\hat{y},...,\hat{y}^{k+1}\Big),
\\
&&\ba=(a_1,a_2,...,a_m)^t,~~~~~~\hat{\ba}=(a_1,ha_2,...,h^{k+1}a_m)^t,
\end{eqnarray*}
with $\hat{x}=x/h$ and $\hat{y}=y/h$, and $m=(k+2)(k+3)/2$ is the number of the basis of $P_{k+1}(K_{z_i})$. By the least-square method, the vector $\hat{\ba}$ can be solved from
\begin{eqnarray}\label{hat_a}
A^tA\hat{\ba}=A^t\bb_h,
\end{eqnarray}
where $\bb_h=(\bar{u}_h(z_{i,1}),\bar{u}_h(z_{i,2}),...,\bar{u}_h(z_{i,n_{z_i}}))^t$ and
\begin{eqnarray*}
A=\left(\begin{matrix}
     1 & \hat{x}_1 & \hat{y}_1  & ... & \hat{x}_1^{k+1} & \hat{x}_1^{k}\hat{y}_1 & ... & \hat{y}_1^{k+1} \\
     1 & \hat{x}_2 & \hat{y}_2  & ... & \hat{x}_2^{k+1}  & \hat{x}_2^{k}\hat{y}_2 & ... & \hat{y}_2^{k+1} \\
     \vdots & \vdots & \vdots & \vdots & \vdots & \vdots & \vdots & \vdots \\
     1 & \hat{x}_{n_{z_i}} & \hat{y}_{n_{z_i}} & ... & \hat{x}_{n_{z_i}}^{k+1} & \hat{x}_{n_{z_i}}^{k}\hat{y}_{n_{z_i}} & ... & \hat{y}_{n_{z_i}}^{k+1}
\end{matrix}\right)
\end{eqnarray*}
 where $(\hat{x}_j,\hat{y}_j)$ is the coordinates of $z_{i,j}$ in the reference domain.
 Define
  $G_hu_h$ at the point $z_i$ as
  \begin{eqnarray*}
  G_hu_h(z_i)=\nabla p_{k+1}(0,0;z_i).
  \end{eqnarray*}

  {\bf Case 2.} For a boundary vertex $z_i\in \partial \Omega$, we define
\begin{eqnarray*}
G_hu_h(z_i)=\frac{\sum_{z_{i,j}\in\mathcal{M}^0_i}\nabla p_{k+1}(x_j,y_j;z_{i,j})}{m_{z_i}},
\end{eqnarray*}
where $m_{z_i}$ is the number of interior vertices in $\mathcal{M}^0_i$ and $(x_j,y_j)$ is the local coordinates of $z_i$ with $z_{i,j}$ be the origin.

 {\bf Case 3.} For an edge node $z_i$ which lies on an edge between vertices $z_{i,1}$ and $z_{i,2}$, we define
   \begin{eqnarray*}
  G_hu_h(z_i)=\alpha\nabla p_{k+1}(x_1,y_1;z_{i,1})+(1-\alpha)\nabla p_{k+1}(x_2,y_2;z_{i,2}),~~0\leq\alpha\leq 1,
  \end{eqnarray*}
  where $(x_1,y_1)$ and $(x_2,y_2)$ are the coordinates of $z_{i}$ with respect to the origins $z_{i,1}$ and $z_{i,2}$, respectively.
 The weight $\alpha$ is determined by the ratio of the distances of $z_i$ to $z_{i,1}$ and $z_{i,2}$, that is $\alpha=|z_i-z_{i,2}|/|z_{i,1}-z_{i,2}|$, see Fig. \ref{fig:internal_nodes} (a).

  {\bf Case 4.} For an internal node $z_i$ which lies in an element formed by vertices $z_{i,1}$, $z_{i,2}$,..., $z_{i,4}$, we define
   \begin{eqnarray*}
  G_hu_h(z_i)=\sum_{j=1}^{4}\alpha_j\nabla p_{k+1}(x_j,y_j;z_{i,j}),~~~\sum_{j=1}^{4}\alpha_j=1,~\alpha_j\geq 0,
  \end{eqnarray*}
  where $(x_j,y_j)$ is the local coordinates of $z_{i}$ with respect to the origin $z_{i,j}$.
 The weight $\alpha_j$ is determined by the space ratio of the opposite patch to $z_{i,j}$, that is $\alpha_j=|S_j|/S$, and $S=\sum_{l=1}^4|S_l|$, see Fig. \ref{fig:internal_nodes} (b).

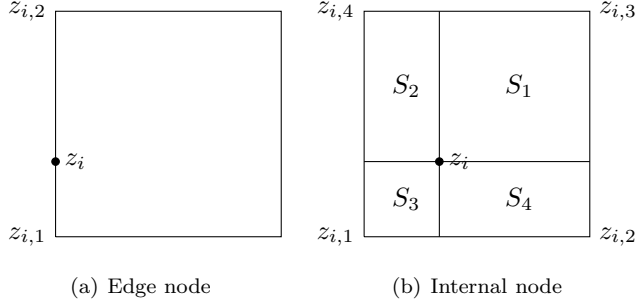
\begin{figure}[!htp]
\begin{center}
\subfigure[Edge node]{
\begin{tikzpicture}
    \path (-1,-1) coordinate (A1);
    \path (0,-1) coordinate (A2);
    \path (2,-1) coordinate (A3);
    \path (-1,0) coordinate (A4);
    \path (-1,0) coordinate (A5);
    \path (2,0) coordinate (A6);
    \path (-1,2) coordinate (A7);
    \path (0,2) coordinate (A8);
    \path (2,2) coordinate (A9);
    \path (-0.75,-0.5) coordinate (A10);
    \path (0.75,-0.5) coordinate (A11);
    \path (0.75,1) coordinate (A12);
    \path (-0.75,1) coordinate (A13);
    \draw (A1) -- (A3) -- (A9) -- (A7) -- (A1);
    \draw node[right] at (A5) {$z_{i}$};
    \draw node[left] at (A1) {$z_{i,1}$};
    \draw node[left] at (A7) {$z_{i,2}$};
    \filldraw[black] (A5) circle(0.05);
\end{tikzpicture}
}
\subfigure[Internal node]{
\begin{tikzpicture}
    \path (-1,-1) coordinate (A1);
    \path (0,-1) coordinate (A2);
    \path (2,-1) coordinate (A3);
    \path (-1,0) coordinate (A4);
    \path (0,0) coordinate (A5);
    \path (2,0) coordinate (A6);
    \path (-1,2) coordinate (A7);
    \path (0,2) coordinate (A8);
    \path (2,2) coordinate (A9);
    \path (-0.75,-0.5) coordinate (A10);
    \path (0.75,-0.5) coordinate (A11);
    \path (0.75,1) coordinate (A12);
    \path (-0.75,1) coordinate (A13);
    \draw (A1) -- (A3) -- (A9) -- (A7) -- (A1);
    \draw (A2) -- (A8);
    \draw (A4) -- (A6);
    \draw node[right] at (A5) {$z_{i}$};
    \draw node[left] at (A1) {$z_{i,1}$};
    \draw node[right] at (A3) {$z_{i,2}$};
    \draw node[right] at (A9) {$z_{i,3}$};
    \draw node[left] at (A7) {$z_{i,4}$};
    \draw node[right] at (A10) {$S_3$};
    \draw node[right] at (A11) {$S_4$};
    \draw node[right] at (A12) {$S_1$};
    \draw node[right] at (A13) {$S_2$};
    \filldraw[black] (A5) circle(0.05);
\end{tikzpicture}
}
\end{center}
\caption{The lengths/areas that distributed by node $z_i$.}
\label{fig:internal_nodes}
\end{figure}

\begin{remark}
For any $u_h\in V_h$, $G_hu_h$ is defined as the linear combination of the values of $G_hu_h$ at the interior vertex.
For $u\in C^0(\Omega)$, we define $G_hu$ by
\begin{eqnarray}\label{G_hI_h}
G_hu=G_h\mathcal{I}_hu,
\end{eqnarray}
where $\mathcal{I}_h:C^0(\Omega)\rightarrow \mathcal{S}_h\subset V_h$ is the interpolation operator given in (\ref{interplation}).
\end{remark}

\section{Superconvergence estimates}
In this section, we report several properties of the operator $G_h$, and analyze the superconvergence  between $\nabla u$ and $G_hu_h$.

The following lemma can be directly verified following the same procedure as Lemma 3.10 in \cite{Zhang2005}.
\begin{lemma}\label{lem3.10}
Let $z_i\in\mathcal{M}^0$ be an interior vertex with the patch $K_{z_i}$, and let $p_{k+1}(\cdot,\cdot;{{z}}_i)$ be the least square polynomial of the function $v\in \mathcal{S}_h$ in the patch $K_{z_i}$. Then there is a constant $C$ such that
\begin{equation*}
|\nabla p_{k+1}(\cdot,\cdot;{{z}}_i)|_{\infty,K_{{{z}}_i}}\leq Ch^{-1}|v|_{1,K_{{{z}}_i}}.
\end{equation*}
\end{lemma}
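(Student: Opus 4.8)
The plan is to follow the proof of Lemma 3.10 in Zhang2005 \cite{Zhang2005}, adapting it to the present setting. The statement concerns the least-square fitting polynomial $p_{k+1}(\cdot,\cdot;z_i)\in P_{k+1}(K_{z_i})$ attached to an interior vertex $z_i$, and asks for the local bound $|\nabla p_{k+1}|_{\infty,K_{z_i}}\leq Ch^{-1}|v|_{1,K_{z_i}}$ with $v\in\mathcal{S}_h$. The natural route is a scaling argument: pull everything back to a fixed reference patch $\widehat{K}$ of size $O(1)$ via the affine map $\widehat{x}=x/h$, $\widehat{y}=y/h$, so that the fitting polynomial becomes $\widehat{p}_{k+1}(\widehat{x},\widehat{y})=\widehat{\bP}\widehat{\ba}$ with coefficient vector $\widehat{\ba}$ solving the normal equations $A^tA\widehat{\ba}=A^t\bb_h$ from \eqref{hat_a}.

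First I would establish a discrete stability estimate for the least-square problem on the reference patch. Because the mesh is shape-regular and rectangular, the sampling nodes $\{(\widehat{x}_j,\widehat{y}_j)\}$ form a $P_{k+1}$-unisolvent configuration on $\widehat{K}$ (or, more carefully, the least-square system has a uniformly bounded pseudo-inverse $(A^tA)^{-1}A^t$), so there is a constant depending only on $k$ and the shape-regularity constant with $|\widehat{\ba}|_{\ell^2}\leq C\,|\bb_h|_{\ell^2}$. Since $\bb_h$ collects the nodal values $\bar u_h(z_{i,j})$ of $v$ on $\overline{K_{z_i}}$, and since $v\in\mathcal{S}_h$ is a fixed-degree polynomial on each element with $O(1)$ reference size, norm equivalence on finite-dimensional spaces gives $|\bb_h|_{\ell^2}\leq C\|\widehat v\|_{L^\infty(\widehat{K})}\leq C\|\widehat v\|_{L^2(\widehat{K})}$; here $\widehat v$ is $v$ expressed in the reference variables. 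Then $|\nabla_{\widehat{x}}\widehat{p}_{k+1}|_{\infty,\widehat{K}}\leq C|\widehat{\ba}|_{\ell^2}\leq C\|\widehat v\|_{L^2(\widehat{K})}$.

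The remaining point is to upgrade the $L^2$ bound on $\widehat v$ to an $H^1$-seminorm bound, which is where the polynomial-preserving property enters. Since $p_{k+1}$ reproduces any polynomial of degree $\leq k+1$ exactly from its nodal samples — in particular it reproduces constants — the map $v\mapsto\nabla p_{k+1}$ annihilates constants; hence in the estimate one may replace $\widehat v$ by $\widehat v - \bar c$ for any constant $\bar c$, and choosing $\bar c$ to be the mean of $\widehat v$ over $\widehat{K}$ and invoking the Poincaré inequality on the fixed domain $\widehat{K}$ gives $\|\widehat v-\bar c\|_{L^2(\widehat{K})}\leq C|\widehat v|_{1,\widehat{K}}$. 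Finally, undoing the scaling, $|\nabla p_{k+1}|_{\infty,K_{z_i}}=h^{-1}|\nabla_{\widehat x}\widehat p_{k+1}|_{\infty,\widehat K}\leq Ch^{-1}|\widehat v|_{1,\widehat K}=Ch^{-1}|v|_{1,K_{z_i}}$, as the $H^1$-seminorm is scale-invariant in two dimensions. The main obstacle is the first step: verifying that the least-square normal equations are uniformly invertible across all admissible vertex patches, i.e. that $\|(A^tA)^{-1}A^t\|$ is bounded independently of $h$ and of which interior vertex one is at. On a uniform rectangular mesh there are only finitely many patch geometries, so this is immediate; for a general shape-regular rectangular mesh one argues by a compactness/continuity argument over the (compact) set of admissible patch shapes, exactly as in \cite{Zhang2005}, using that $P_{k+1}$-unisolvence persists under small perturbations of the node configuration.
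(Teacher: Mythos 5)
Your argument is correct and is essentially the proof the paper intends: the paper gives no proof of its own but defers to Lemma 3.10 of \cite{Zhang2005}, whose argument is exactly your scaling to the reference patch, uniform invertibility of the least-squares system over shape-regular patch configurations, reproduction of constants plus a Poincar\'e inequality, and the scale invariance of the $H^1$-seminorm in two dimensions. No substantive difference to report.
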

By the definition given in subsection \ref{PPR-operator}, $G_h$ is a polynomial-preserving operator which satisfies the following lemma.
\begin{lemma}
The gradient recovery operator $G_h$ satisfies the following properties
\begin{eqnarray}
&&G_hu_h=G_h\bar{u}_h,~~~~\forall u_h\in V_h,\label{inq_Gh_3}
\\
&&\|\nabla u-G_hu\|\leq Ch^{k+1}|u|_{k+2},~~~~\forall u\in H^{k+2}(\Omega),\label{inq_Gh_2}
\end{eqnarray}
where $C$ is a constant and $\bar{u}_h\in \mathcal{S}_h$ satisfying (\ref{refor_uh})-(\ref{req_baru_1}) is the redefined function of $u_h$.
\end{lemma}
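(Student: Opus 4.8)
The plan is to establish the two assertions separately: \eqref{inq_Gh_3} is essentially definitional, while \eqref{inq_Gh_2} is the classical polynomial preserving recovery consistency estimate transplanted to the weak Galerkin setting.

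For \eqref{inq_Gh_3} I would trace through the construction of $G_h$ in Subsection \ref{PPR-operator}. In each of the four cases the recovered nodal value $G_hu_h(z_i)$ is assembled from least-square polynomials $p_{k+1}(\cdot,\cdot;z)$, and by \eqref{hat_a} each such polynomial depends on $u_h$ only through the data vector $\bb_h=(\bar u_h(z_{i,1}),\dots,\bar u_h(z_{i,n_{z_i}}))^t$, hence only through the reformulated function $\bar u_h$. Since $\bar u_h\in\mathcal{S}_h\subset C^0(\Omega)$ is already single-valued at every node, applying the reformulation \eqref{refor_uh}--\eqref{req_baru_1} to $\bar u_h$ returns $\bar u_h$ itself. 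Thus the recovery procedure produces identical least-square polynomials, and therefore identical recovered gradients, whether it starts from $u_h$ or from $\bar u_h$; this is \eqref{inq_Gh_3}.

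For \eqref{inq_Gh_2}, recall $G_hu=G_h\mathcal{I}_hu$ with $\mathcal{I}_hu\in\mathcal{S}_h$, and work element by element. Fix $T\in\mathcal{T}_h$, let $\omega_T$ denote the union of the vertex patches $K_z$ over $z\in\mathcal{N}(T)$ (of diameter comparable to $h$), and choose, via the Bramble--Hilbert/Dupont--Scott lemma, a polynomial $p\in P_{k+1}(\omega_T)$ with $|u-p|_{j,\omega_T}\le Ch^{k+2-j}|u|_{k+2,\omega_T}$ for $0\le j\le k+1$. The key ingredient is that $G_h\mathcal{I}_h$ reproduces gradients of polynomials of degree $k+1$: for $q\in P_{k+1}$ the interpolant $\mathcal{I}_hq$ carries the values $q(z_{i,j})$ at the Lobatto-type mesh nodes, so the least-square problem \eqref{hat_a} on each patch is solved exactly by $q$, and hence $G_h\mathcal{I}_hq=\nabla q$. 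Taking $q=p$ and using linearity of $G_h\mathcal{I}_h$ gives, on $T$,
\begin{equation*}
\nabla u-G_h\mathcal{I}_hu=(\nabla u-\nabla p)+G_h\mathcal{I}_h(p-u).
\end{equation*}
The first term satisfies $\|\nabla(u-p)\|_{0,T}\le Ch^{k+1}|u|_{k+2,\omega_T}$. For the second, $G_h\mathcal{I}_h(p-u)$ is, on $T$, a piecewise polynomial determined by the nodal values $G_h\mathcal{I}_h(p-u)(z_i)$, so a scaling and norm-equivalence argument gives $\|G_h\mathcal{I}_h(p-u)\|_{0,T}\le Ch\max_{z_i\in\mathcal{N}(T)}|G_h\mathcal{I}_h(p-u)(z_i)|$; since in all four cases each such value is a convex combination of gradients of patch least-square polynomials of $\mathcal{I}_h(p-u)\in\mathcal{S}_h$, Lemma \ref{lem3.10} yields $|G_h\mathcal{I}_h(p-u)(z_i)|\le Ch^{-1}|\mathcal{I}_h(p-u)|_{1,\omega_T}$. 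Writing $\mathcal{I}_h(p-u)=(\mathcal{I}_h(p-u)-(p-u))+(p-u)$ and combining the standard $\mathcal{I}_h$-interpolation estimate with $|p-u|_{k+1,\omega_T}\le Ch|u|_{k+2,\omega_T}$ and $|p-u|_{1,\omega_T}\le Ch^{k+1}|u|_{k+2,\omega_T}$ gives $|\mathcal{I}_h(p-u)|_{1,\omega_T}\le Ch^{k+1}|u|_{k+2,\omega_T}$. Altogether $\|\nabla u-G_h\mathcal{I}_hu\|_{0,T}\le Ch^{k+1}|u|_{k+2,\omega_T}$; squaring, summing over $T$, and using the finite overlap of the $\omega_T$ gives \eqref{inq_Gh_2}.

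The step I expect to require the most care is the polynomial-reproduction property: one needs the normal-equation matrix $A^tA$ in \eqref{hat_a} to be invertible on each interior-vertex patch, equivalently that the Lobatto-type nodes in the patch determine a $P_{k+1}$ polynomial uniquely in the least-square sense. This is exactly where the rectangular-mesh structure is used, and it is the same hypothesis that underlies Lemma \ref{lem3.10} (the counterpart of Lemma 3.10 in \cite{Zhang2005}). The rest is routine: Cases 2--4 reduce to the interior-vertex case because their weights are nonnegative and sum to one (so convexity preserves the bounds), and the passage from nodal $\ell^\infty$-bounds to $L^2(T)$-bounds is a standard scaling argument.
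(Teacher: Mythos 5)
Your proposal is correct, and it follows exactly the route the paper itself relies on: the paper states this lemma without proof, asserting it ``by the definition'' of $G_h$ and implicitly invoking the standard PPR consistency results of \cite{Naga2004,Zhang2005,ZN05}, namely that \eqref{inq_Gh_3} is definitional (the recovery uses only the single-valued data $\bar u_h$) and that \eqref{inq_Gh_2} follows from $P_{k+1}$-reproduction of the patch least-squares fit plus the boundedness estimate of Lemma \ref{lem3.10} and a Bramble--Hilbert argument with finite patch overlap. Your write-up is a faithful filling-in of that standard argument, including correctly flagging the rank condition for $A^tA$ on each patch as the point where the rectangular-mesh structure enters; the only cosmetic caveat is that for boundary vertices the local patch $\omega_T$ should be taken one layer larger so that it contains the patches $K_{z_{i,j}}$ of the interior vertices used in Case 2, which does not affect the conclusion.
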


The following lemma provides an important tool in establishing our main result.
\begin{lemma}
For $u_h\in V_h$, the following property holds true
\begin{eqnarray}
\3baru_h\3bar^2 \geq\|G_h\bar{u}_h\|^2,\label{inq_Gh_4}
\end{eqnarray}
where $\bar{u}_h\in \mathcal{S}_h$ satisfying (\ref{refor_uh})-(\ref{req_baru_1}) is the redefined function of $u_h$.
\end{lemma}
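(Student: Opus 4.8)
The goal is to bound $\|G_h\bar{u}_h\|^2$ by the energy norm $\3bar u_h\3bar^2$. The plan is to work locally on each element $T$ and use the local structure of $G_h$: on any element $T$, the recovered gradient $G_h\bar{u}_h$ is a polynomial whose nodal values at the vertices (and edge/internal nodes) are built from the values $\nabla p_{k+1}(\cdot,\cdot;z_{i})$ at nearby interior vertices $z_i$, with convex-combination weights. Hence, by the triangle inequality and the convexity of the weights, $\|G_h\bar{u}_h\|^2_{T}$ is controlled by $\sum_{z_i}\max_{z_i}|\nabla p_{k+1}(\cdot,\cdot;z_i)|^2_{\infty,K_{z_i}}\,|T|$, where the sum runs over the finitely many interior vertices whose patch influences $T$. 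By shape-regularity this is a fixed finite number of terms, so we reduce to estimating $|\nabla p_{k+1}(\cdot,\cdot;z_i)|_{\infty,K_{z_i}}$ for each relevant interior vertex $z_i$.

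The next step is to invoke Lemma \ref{lem3.10}, which gives $|\nabla p_{k+1}(\cdot,\cdot;z_i)|_{\infty,K_{z_i}}\leq Ch^{-1}|\bar{u}_h|_{1,K_{z_i}}$. Combining this with the previous paragraph and summing over all elements $T$ (each patch $K_{z_i}$ being counted only a bounded number of times, again by shape-regularity) yields
\begin{equation*}
\|G_h\bar{u}_h\|^2 \leq C\sum_{z_i\in\mathcal{M}^0}h^{-2}|\bar{u}_h|^2_{1,K_{z_i}}\cdot h^2 \leq C\sum_{z_i\in\mathcal{M}^0}|\bar{u}_h|^2_{1,K_{z_i}}\leq C|\bar{u}_h|^2_{1,\Omega}=C\sum_{T\in\mathcal{T}_h}\|\nabla\bar{u}_h\|^2_T.
\end{equation*}
It then remains to bound $\sum_T\|\nabla\bar{u}_h\|^2_T$ by $\3bar u_h\3bar^2$. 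The natural route is to write $\bar{u}_h = (\bar{u}_h - u_0) + u_0$ on each $T$ and use $\|\nabla u_0\|^2_T\leq C\3bar u_h\3bar^2$ summed via \eqref{prop_norm_1}; the remaining piece $\nabla(\bar{u}_h-u_0)$ is handled with an inverse inequality, $\|\nabla(\bar{u}_h-u_0)\|_T\leq Ch_T^{-1}\|\bar{u}_h-u_0\|_T$, and then by equivalence of norms on the finite-dimensional polynomial space, $\|\bar{u}_h-u_0\|^2_T\leq Ch_T\sum_{z\in\mathcal{N}(T)}|(\bar{u}_h-u_0)(z)|^2$, reducing to the nodal differences.

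The crux of the argument — and the main obstacle — is estimating the nodal differences $(\bar{u}_h-u_0)|_T(z)$. Here is exactly where Lemma \ref{jump_comb} is designed to help: for a vertex or edge node $z$ on $\partial T$, $(\bar{u}_h-u_0)|_T(z)$ is a convex combination of jumps of $u_h$ at $z$, i.e.\ of differences $u_0|_e(z)-u_b|_e(z)$ for edges $e$ incident to $z$; for an internal node it is zero. So $|(\bar{u}_h-u_0)|_T(z)|^2$ is bounded by a sum of $|(u_0-u_b)|_e(z)|^2$ over a bounded number of edges $e$ near $T$. Using norm equivalence on $P_k(e)$ in the other direction, $|(u_0-u_b)|_e(z)|^2\leq Ch_e^{-1}\|u_0-u_b\|^2_{\partial T'}$ for the element $T'$ adjacent to $e$. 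Chaining these estimates — nodal difference $\to$ edge jumps $\to$ $\|u_0-u_b\|^2_{\partial T'}$ — and summing over all $T$ (each boundary term appearing a bounded number of times), we obtain
\begin{equation*}
\sum_{T\in\mathcal{T}_h}\|\nabla\bar{u}_h\|^2_T \leq C\sum_{T\in\mathcal{T}_h}\|\nabla u_0\|^2_T + C\sum_{T\in\mathcal{T}_h}h_T^{-1}\|u_0-u_b\|^2_{\partial T}\leq C\3bar u_h\3bar^2,
\end{equation*}
where the last inequality uses \eqref{prop_norm_1} and \eqref{prop_norm_2}. Combining with the earlier chain gives \eqref{inq_Gh_4}. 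The delicate bookkeeping is making sure every constant depends only on shape-regularity and $k$ (not on $h$ or on the number of patches a given element meets), which is where the shape-regularity hypothesis on $\mathcal{T}_h$ is used repeatedly.
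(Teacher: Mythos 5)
Your argument is correct and essentially reproduces the paper's own proof: the same patch estimate via Lemma \ref{lem3.10}, the same splitting of $\bar u_h$ into $u_0$ plus the correction $\bar u_h-u_0$ whose nodal values are reduced to edge jumps by Lemma \ref{jump_comb}, followed by the edge inverse inequality and \eqref{prop_norm_1}--\eqref{prop_norm_2}. The only slip is the intermediate nodal norm equivalence, which in two dimensions scales as $\|\bar u_h-u_0\|^2_T\le Ch_T^2\sum_{z\in\mathcal{N}(T)}|(\bar u_h-u_0)(z)|^2$ rather than with $h_T$; with the correct power the chain produces exactly the $h_T^{-1}\|u_0-u_b\|^2_{\partial T}$ term in your final display, so the conclusion stands unchanged.
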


\begin{proof}
We will prove \eqref{inq_Gh_4} in three steps.

{\bf Step 1}. For any $T\in\mathcal{T}_h$, recall that ${\mathcal{M}^0}(T)$ denotes the set of the interior vertices in $\bar T\cap\Omega$. Then, from the definition of $G_h$, we have
\begin{eqnarray*}
\|G_h\bar{u}_h\|_{0,T}\leq C|T|^\frac{1}{2}\|G_h\bar{u}_h\|_{\infty,T}\leq C|T|^\frac{1}{2}\max_{{z}_i\in{\mathcal{M}^0(T)}}\{|\nabla p_{k+1}(\cdot,\cdot;{{z}}_i)|_{\infty,K_{{{z}}_i}}\}.
\end{eqnarray*}
Using Lemma \ref{lem3.10}, we have
\begin{eqnarray*}
\|G_h\bar{u}_h\|_{0,T}\leq C |T|^\frac{1}{2}\max_{{{z}}_i\in{\mathcal{M}^0(T)}}\{h^{-1}|\bar{u}_h|_{1,K_{{{z}}_i}}\} \leq C \max_{{{z}}_i\in{\mathcal{M}^0(T)}}\{|\bar{u}_h|_{1,K_{{{z}}_i}}\}.
\end{eqnarray*}
It follows that
\begin{eqnarray}\label{main_inq_1}
\|G_h\bar{u}_h\|^2=\sum_{T\in\mathcal{T}_h}\|G_h\bar{u}_h\|^2_{0,T}\leq C \sum_{z_i\in\mathcal{M}^0}|\bar{u}_h|^2_{1,K_{z_i}}.
\end{eqnarray}

{\bf Step 2}. Define the auxiliary function $\tilde{u}_h$ as
\begin{eqnarray*}
\tilde{u}_h=\bar{u}_h-u_0.
\end{eqnarray*}
For any interior vertex $z_i\in\mathcal{M}^0$, it follows from the definition of $\bar{u}_h$ and $u_0$ that $\tilde{u}_h$ is a piecewise polynomial on $K_{z_i}$.
Then from the triangle inequality  we have
\begin{eqnarray*}
|\bar{u}_h|^2_{1,K_{z_i}}=|\tilde{u}_h+u_0|^2_{1,K_{z_i}}
\leq|\tilde{u}_h|^2_{1,K_{z_i}}+|u_0|^2_{1,K_{z_i}}.
\end{eqnarray*}
It follows from (\ref{prop_norm_1}) that
\begin{eqnarray}
\sum_{z_i\in\mathcal{M}^0}|\bar{u}_h|^2_{1,K_{z_i}}\leq&&\sum_{z_i\in\mathcal{M}^0}(|\tilde{u}_h|^2_{1,K_{z_i}}+|u_0|^2_{1,K_{z_i}})
\leq C(\sum_{z_i\in\mathcal{M}^0}|\tilde{u}_h|^2_{1,K_{z_i}}+\3baru_h\3bar^2). \label{eq step2}~~~~
\end{eqnarray}

{\bf Step 3}. We shall prove
\begin{eqnarray}
|\tilde{u}_h|^2_{1,K_{z_i}}\leq C\3baru_h\3bar^2_{K_{z_i}}. \label{eq step3}
\end{eqnarray}
First, we consider an element $T_1\subset K_{z_i}$. Let $\tilde{u}_h|_{T_1}=\sum_{{z}_{i,j}\in\mathcal{N}(T_1)}{\tilde{u}_h}({z_{i,j}})l_{{i,j}}$, where $l_{i,j}(z_{k,l})=\delta_{i,k}\delta_{j,l}$ are the Lagrange bases. Let $\hat{l}_{{i,j}}$ be the affine function for ${l}_{{i,j}}$ on the reference domain. Note that $|\nabla\hat{l}_{{i,j}}|$ is uniformly bounded, then we obtain
\begin{eqnarray*}
|{\tilde{u}_h}|^2_{1,T_1}=&&\int_{T_1}|\nabla {\tilde{u}_h}|^2dx
=\int_{T_1}\Big|\nabla \Big(\sum_{z_{i,j}\in\mathcal{N}(T_1)}{\tilde{u}_h}({z_{i,j}})l_{{i,j}}\Big)\Big|^2dx
\\
\leq&&C\sum_{z_{i,j}\in\mathcal{N}(T_1)}|{\tilde{u}_h}({z_{i,j}})|^2\int_{\hat{T}_1}|\nabla\hat{l}_{{i,j}}|^2d{\hat x}
~\leq C\sum_{z_{i,j}\in\mathcal{N}(T_1)}|{\tilde{u}_h}({z_{i,j}})|^2.
\end{eqnarray*}
Let $\mathcal{E}(T_1)=\{e\in\mathcal{E}_h : e\cap \mathcal{N}(T_1)\neq\emptyset\}$ and $[u_h]_e$ be the jump of $u_h$ over $e$.
From Lemma \ref{jump_comb}, we know that the values of ${\tilde{u}_h}$ on the mesh nodes on $\partial T_1$ is the combination of the jump of $u_h$ on edges $e\in\mathcal{E}(T_1)$, the values of ${\tilde{u}_h}$ on the internal mesh nodes in $T_1$ are zeros. Using the inverse inequality $\|v\|_{\infty,e}\leq Ch^{-\frac{1}{2}}\|v\|_{0,e}$ and (\ref{prop_norm_2}), we obtain
\begin{eqnarray*}
\sum_{{z}_{i,j}\in\mathcal{N}(T_1)}|{\tilde{u}_h}({z_{i,j}})|^2 &&
\leq C\sum_{e\in\mathcal{E}(T_1)}|[u_h]_e|^2_{\infty,e}
\leq C\sum_{e\in\mathcal{E}(T_1)}h^{-1}|[u_h]_e|^2_{0,e}
\\
&&\leq C\sum_{T\in \mathcal{T}_h(T_1)}h^{-1}\langle u_0-u_b,u_0-u_b\rangle_{\partial T}
\leq  C\sum_{T\in \mathcal{T}_h(T_1)}\3baru_h\3bar^2_T,
\end{eqnarray*}
where $\mathcal{T}_h(T_1):=\{T\in\mathcal{T}_h: T\cap e\neq\emptyset, e\in\mathcal{E}(T_1)\}$.
For other three elements $T\in K_{z_i}$, the proof can be finished similarly.
Finally, combining \eqref{main_inq_1}, \eqref{eq step2}, and \eqref{eq step3}, we have
\begin{eqnarray*}
\|G_h\bar{u}_h\|^2\leq&& C \sum_{z_i\in\mathcal{M}^0}|\bar{u}_h|^2_{1,K_{z_i}}
\leq C (\sum_{z_i\in\mathcal{M}^0}|{\tilde{u}_h}|^2_{1,K_{z_i}}+\3baru_h\3bar^2)
\leq C\3baru_h\3bar^2.
\end{eqnarray*}
\end{proof}

Now we are ready to state our main result for superconvergence.
\begin{theorem}
Let $u\in H^{k+2}(\Omega)$ be the solution of (\ref{model}) and $u_h\in V_h$ be the solution of (\ref{wg_algorithm}). Let $G_h u_h$ be the recovered gradient by PPR introduced in Section 5.3. Then we have the following error estimate
\begin{eqnarray}
\|G_hu_h-\nabla u\|\leq C h^{\min\{k+1,k+\frac{\alpha-1}{2}\}}\|u\|_{k+2}. \label{main thm}
\end{eqnarray}
\end{theorem}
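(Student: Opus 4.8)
The plan is to combine the supercloseness estimate of Theorem~\ref{order_est} with the approximation and stability properties of the recovery operator $G_h$ collected in Section~6, together with the consistency relation $G_h u = G_h\mathcal{I}_h u$ from \eqref{G_hI_h}. The natural decomposition is the triangle inequality
\begin{equation*}
\|G_h u_h - \nabla u\| \leq \|G_h u_h - G_h u\| + \|G_h u - \nabla u\|.
\end{equation*}
The second term is bounded directly by \eqref{inq_Gh_2}, giving $Ch^{k+1}|u|_{k+2}$, which is no larger than the claimed rate since $\min\{k+1,k+\frac{\alpha-1}{2}\}\le k+1$.

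For the first term I would use \eqref{G_hI_h} to write $G_h u = G_h \mathcal{I}_h u$, so that $G_h u_h - G_h u = G_h(u_h - \mathcal{I}_h u)$. Since $G_h$ is linear and, by \eqref{inq_Gh_3}, $G_h w = G_h\bar w$ for any $w\in V_h$, we have $G_h(u_h-\mathcal{I}_h u)=G_h\overline{(u_h-\mathcal{I}_h u)}$. Now apply the stability bound \eqref{inq_Gh_4} with $u_h$ replaced by $u_h - \mathcal{I}_h u \in V_h$:
\begin{equation*}
\|G_h(u_h - \mathcal{I}_h u)\|^2 = \|G_h\overline{(u_h - \mathcal{I}_h u)}\|^2 \leq \3bar u_h - \mathcal{I}_h u\3bar^2.
\end{equation*}
By Theorem~\ref{order_est} the right-hand side is bounded by $Ch^{\min\{k+1,k+\frac{\alpha-1}{2}\}}\|u\|_{k+2}$, so the first term carries exactly the asserted rate.

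Combining the two estimates yields
\begin{equation*}
\|G_h u_h - \nabla u\| \leq \3bar u_h - \mathcal{I}_h u\3bar + \|G_h\mathcal{I}_h u - \nabla u\| \leq C h^{\min\{k+1,k+\frac{\alpha-1}{2}\}}\|u\|_{k+2},
\end{equation*}
which is \eqref{main thm}. The only subtlety to check carefully is that $\mathcal{I}_h u \in V_h$ so that $\3bar \mathcal{I}_h u - u_h\3bar$ is well-defined and Lemma~6.4 (inequality~\eqref{inq_Gh_4}) applies to the difference $u_h-\mathcal{I}_h u$; this is ensured by $\mathcal{S}_h = Q_k\cap C^0(\Omega)\subset V_h$, already noted before \eqref{G_hI_h}. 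No step is a genuine obstacle here — the real work has been done in Theorem~\ref{order_est} and in Lemma~6.4 — so the proof is essentially a short assembly of already-established pieces; the main point is simply to route the error through $G_h(u_h-\mathcal{I}_h u)$ rather than trying to estimate $G_hu_h$ and $\nabla u$ separately.
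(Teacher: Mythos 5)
Your proof is correct and follows essentially the same route as the paper: the triangle inequality through $G_h\mathcal{I}_h u$, the bound \eqref{inq_Gh_2} for $\|G_h\mathcal{I}_hu-\nabla u\|$, the identities \eqref{G_hI_h} and \eqref{inq_Gh_3} together with the stability estimate \eqref{inq_Gh_4} applied to $u_h-\mathcal{I}_h u\in V_h$, and finally the supercloseness bound of Theorem~\ref{order_est}. The only cosmetic difference is that the paper writes the chain in squared norms, while you argue with the norms themselves; the substance is identical.
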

\begin{proof}
It follows from (\ref{inq_Gh_3}), (\ref{G_hI_h}), (\ref{inq_Gh_2}), (\ref{inq_Gh_4}), and (\ref{superclose}) that
\begin{eqnarray*}
&&\|G_hu_h-\nabla u\|^2
\\
\leq&&\|G_hu_h-G_h\mathcal{I}_hu\|^2+\|G_h\mathcal{I}_hu-\nabla u\|^2
\\
\leq&&\|G_h(\bar{u}_h-\mathcal{I}_hu)\|^2+Ch^{2(k+1)}|u|^2_{k+2}
\\
\leq&&\3baru_h-\mathcal{I}_hu\3bar^2+Ch^{2(k+1)}|u|^2_{k+2}
\\
\leq&&Ch^{2({\min\{k+1,k+\frac{\alpha-1}{2}\}})}\|u\|^2_{k+2},
\end{eqnarray*}
which completes the proof.
\end{proof}

\begin{remark}
The estimate \eqref{main thm} shows that the gradient recovery $G_hu_h$ is superconvergent to $\nabla u$ when $\alpha \geq 1$. As the value of $\alpha$ increases, the convergence rate will also increase, and it reach the maximum rate of convergence $k+1$ when $\alpha =3$.
\end{remark}

\section{Numerical experiments}
In this section, we present some numerical examples to demonstrate the convergence of WG methods and the PPR recovery. We test our algorithm for the $Q_1$ and $Q_2$ elements, and choose different stabilizing parameters in our numerical algorithms for comparison. We focus on $\3bar u_h-\mathcal{I}_h u\3bar$, the error between the WG solution and its Lagrange interpolation in the energy norm, and $\|G_hu_h-\nabla u\|$, the error between the PPR recovered gradient and the true gradient in the $L^2$ norm.

\begin{example}
(\textbf{Convergence for $k=1$ on uniform meshes})
\end{example}
In this example, we consider the problem (\ref{model}) in the unit square $(0,1)\times (0,1)$, and use a family of uniform Cartesian meshes.
The weak Galerkin space is given by
\begin{equation}
V_h=\{ v=\{ v_0, v_b\}: v_0|_T\in Q_1(T), v_b|_{e}\in
P_1(e),e\subset\partial T,T\in\mathcal{T}_h\}.
\end{equation}
The discrete weak gradient $\nabla_w v$ on each element
$T\in\mathcal{T}_h$ is defined as
\begin{equation}
(\nabla_{w} v,\bq)_T=-(v_0,\nabla\cdot\bq)_T+\langle v_b,  \bq\cdot\bn\rangle_{\partial
T},\quad \forall \bq\in [Q_{0,1},Q_{1,0}]^t.
\end{equation}
The right-hand side function $f$ is chosen such that the exact
solution is
\begin{eqnarray}
u=\sin(\pi x)\sin(\pi y). \label{sine example}
\end{eqnarray}
Tables \ref{sinsink=1uni} and \ref{sinsink=1uniPPR} report the convergence rates of $\3bar u_h-\mathcal{I}_h u\3bar$ and  $\|G_hu_h-\nabla u\|$, respectively. Different values of the stabilizing parameter $\alpha$ have been tested. Here the parameter $N = 1/h$ denotes the number of rectangles in each direction. Table \ref{sinsink=1uni} clearly demonstrates that the convergence rate is $\min\{k+1,k+\frac{\alpha-1}{2}\}$, which is consistent with the error estimates \eqref{superclose}. Table \ref{sinsink=1uniPPR}  indicates the superconvergence behavior of the PPR recovery. We note that for $\alpha=1,2$, the numerical results seem to be even better than our theoretical analysis \eqref{main thm}.

\begin{table}[!htb]
\caption{Example 7.1. Convergence of $\3baru_h-\mathcal{I}_hu\3bar$ for $k=1$ on uniform meshes.}
\label{sinsink=1uni} \centering
\begin{tabular}{|c|c|c|c|c|c|c|}
\hline
\multirow{2}{*}{$N$} & \multicolumn{2}{|c|}{$\alpha=1$} & \multicolumn{2}{|c|}{$\alpha=2$} & \multicolumn{2}{|c|}{$\alpha=3$} \\
\cline{2-7}
& $\3baru_h-\mathcal{I}_h u\3bar$ & order & $\3baru_h-\mathcal{I}_h u\3bar$ & order & $\3baru_h-\mathcal{I}_h u\3bar$ & order\\
\hline
8 & 7.3081e-01 & -- & 3.0840e-01 & -- & 1.3216e-01 & --\\
\hline
16 & 3.6645e-01 & 0.9959 & 1.0916e-01 & 1.4983 & 3.3156e-02 & 1.9949\\
\hline
32 & 1.8335e-01 & 0.9990 & 3.8584e-02 & 1.5004 & 8.2964e-03 & 1.9987\\
\hline
64 & 9.1690e-02 & 0.9998 & 1.3637e-02 & 1.5005 & 2.0746e-03 & 1.9997\\
\hline
128 & 4.5847e-02 & 0.9999 & 4.8204e-03 & 1.5003 & 5.1867e-04 & 1.9999\\
\hline
\end{tabular}
\end{table}

\begin{table}[!htb]
\caption{Example 7.1. Convergence of  $\|G_hu_h-\nabla u\|$ for $k=1$ on uniform meshes.} \label{sinsink=1uniPPR} \centering
\begin{tabular}{|c|c|c|c|c|c|c|}
\hline
\multirow{2}{*}{$N$} & \multicolumn{2}{|c|}{$\alpha=1$} & \multicolumn{2}{|c|}{$\alpha=2$} & \multicolumn{2}{|c|}{$\alpha=3$} \\
\cline{2-7}
& $\|G_h u_h-\nabla u\|$ & order & $\|G_h u_h-\nabla u\|$ & order & $\|G_h u_h-\nabla u\|$ & order\\
\hline
8 & 1.0250e-01 & -- & 1.4942e-01 & -- & 1.5950e-01 & --\\
\hline
16 & 2.1339e-02 & 2.2641 & 4.2838e-02 & 1.8024 & 4.4857e-02 & 1.8302\\
\hline
32 & 5.1285e-03 & 2.0569 & 1.1614e-02 & 1.8831 & 1.1909e-02 & 1.9133\\
\hline
64 & 1.2692e-03 & 2.0146 & 3.0197e-03 & 1.9433 & 3.0591e-03 & 1.9608\\
\hline
128 & 3.1624e-04 & 2.0049 & 7.6942e-04 & 1.9726 & 7.7448e-04 & 1.9818\\
\hline
\end{tabular}
\end{table}

 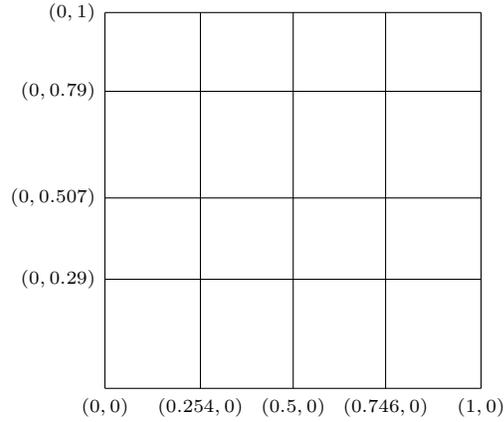
\begin{figure}[!htp]
\begin{center}
\begin{tikzpicture}
    \path (0,0) coordinate (A1);
    \path (0,1.451) coordinate (A2);
    \path (0,2.533) coordinate (A3);
    \path (0,3.952) coordinate (A4);
    \path (0,5) coordinate (A5);
    \path (5,0) coordinate (B1);
    \path (5,1.451) coordinate (B2);
    \path (5,2.533) coordinate (B3);
    \path (5,3.952) coordinate (B4);
    \path (5,5) coordinate (B5);
    \path (1.269,0) coordinate (C1);
    \path (2.5025,0) coordinate (C2);
    \path (3.731,0) coordinate (C3);
    \path (1.269,5) coordinate (D1);
    \path (2.5025,5) coordinate (D2);
    \path (3.731,5) coordinate (D3);
    \draw (A1) -- (A5);
    \draw (B1) -- (B5);
    \draw (C1) -- (D1);
    \draw (C2) -- (D2);
    \draw (C3) -- (D3);
    \draw (A5) -- (B5);
    \draw (A4) -- (B4);
    \draw (A3) -- (B3);
    \draw (A2) -- (B2);
    \draw (A1) -- (B1);
    \draw node[below] at (A1) {\scriptsize${ (0,0)}$};
    \draw node[below] at (C1) {\scriptsize${ (0.254,0)}$};
    \draw node[below] at (C2) {\scriptsize ${(0.5,0)}$};
    \draw node[below] at (C3) {\scriptsize ${(0.746,0)}$};
    \draw node[below] at (B1) {\scriptsize ${(1,0)}$};
    \draw node[left] at (A2) {\scriptsize ${(0,0.29)}$};
    \draw node[left] at (A3) {\scriptsize ${(0,0.507)}$};
    \draw node[left] at (A4) {\scriptsize ${(0,0.79)}$};
    \draw node[left] at (A5) {\scriptsize ${(0,1)}$};
\end{tikzpicture}
\end{center}
\caption{The initial partition.}
\label{fig:partition}
\end{figure}

\begin{example}
(\textbf{Convergence for $k=1$ on heterogeneous meshes})
\end{example}
In this example, we investigate the superconvergence behavior on heterogeneous rectangular meshes.
We use the same function \eqref{sine example} in this test. The initial mesh is randomly perturbed from the uniform mesh, and is given in Fig. \ref{fig:partition}.
The subsequent meshes are produced by uniform refinement.
Errors are reported in Tables \ref{sinsink=1dis} and \ref{sinsink=1disPPR}, in which similar superconvergence phenomenon is observed on these quasi-uniform rectangular meshes.
Although the convergence rate of PPR recovered gradient for $\alpha=1$ is not as high as in the uniform mesh in Example 7.1, but it is still higher than the analytical result \eqref{main thm}.

\begin{table}[htb!]
\caption{Example 7.2. Convergence of $\3baru_h-\mathcal{I}_hu\3bar$ for $k=1$ on heterogeneous meshes.} \label{sinsink=1dis} \centering
\begin{tabular}{|c|c|c|c|c|c|c|}
\hline
\multirow{2}{*}{$N$} & \multicolumn{2}{|c|}{$\alpha=1$} & \multicolumn{2}{|c|}{$\alpha=2$} & \multicolumn{2}{|c|}{$\alpha=3$} \\
\cline{2-7}
& $\3baru_h-\mathcal{I}_h u\3bar$ & order & $\3baru_h-\mathcal{I}_h u\3bar$ & order & $\3baru_h-\mathcal{I}_h u\3bar$ & order\\
\hline
8 & 7.3711e-01 & -- & 3.1389e-01 & -- & 1.3595e-01 & --\\
\hline
16 & 3.6979e-01 & 0.9952 & 1.1114e-01 & 1.4979 & 3.4117e-02 & 1.9945\\
\hline
32 & 1.8504e-01 & 0.9988 & 3.9284e-02 & 1.5003 & 8.5374e-03 & 1.9986\\
\hline
64 & 9.2540e-02 & 0.9997 & 1.3885e-02 & 1.5005 & 2.1349e-03 & 1.9997\\
\hline
128 & 4.6272e-02 & 0.9999 & 4.9079e-03 & 1.5003 & 5.3375e-04 & 1.9999\\
\hline
\end{tabular}
\end{table}
\begin{table}[H]
\caption{Example 7.2. Convergence of $\|G_hu_h-\nabla u\|$ for $k=1$ on heterogeneous meshes.} \label{sinsink=1disPPR} \centering
\begin{tabular}{|c|c|c|c|c|c|c|}
\hline
\multirow{2}{*}{$N$} & \multicolumn{2}{|c|}{$\alpha=1$} & \multicolumn{2}{|c|}{$\alpha=2$} & \multicolumn{2}{|c|}{$\alpha=3$} \\
\cline{2-7}
& $\|G_h u_h-\nabla u\|$ & order & $\|G_h u_h-\nabla u\|$ & order & $\|G_h u_h-\nabla u\|$ & order\\
\hline
8 & 1.1753e-01 & -- & 1.5431e-01 & -- & 1.6310e-01 & --\\
\hline
16 & 2.8433e-02 & 2.0474 & 4.4400e-02 & 1.7972 & 4.6371e-02 & 1.8145\\
\hline
32 & 8.2896e-03 & 1.7782 & 1.2080e-02 & 1.8779 & 1.2381e-02 & 1.9051\\
\hline
64 & 2.6255e-03 & 1.6587 & 3.1488e-03 & 1.9398 & 3.1897e-03 & 1.9566\\
\hline
128 & 8.7219e-04 & 1.5899 & 8.0340e-04 & 1.9706 & 8.0870e-04 & 1.9797\\
\hline
\end{tabular}
\end{table}

\begin{example}
(\textbf{Convergence for $k=2$})
\end{example}
In this example, we test the superconvergence properties for some higher order WG approximations. In particular, we choose $k=2$.
Tables \ref{sinsink=2uni} and \ref{sinsink=2uniPPR} list errors and the convergence rates for
$u_h-\mathcal{I}_hu$ and $G_hu_h-\nabla u$, respectively.

\begin{table}[H]
\caption{Example 7.3. Convergence of $\3baru_h-\mathcal{I}_hu\3bar$ for $k=2$ on uniform meshes.} \label{sinsink=2uni} \centering
\begin{tabular}{|c|c|c|c|c|c|c|}
\hline
\multirow{2}{*}{$N$} & \multicolumn{2}{|c|}{$\alpha=1$} & \multicolumn{2}{|c|}{$\alpha=2$} & \multicolumn{2}{|c|}{$\alpha=3$} \\
\cline{2-7}
& $\3baru_h-\mathcal{I}_h u\3bar$ & order & $\3baru_h-\mathcal{I}_h u\3bar$ & order & $\3baru_h-\mathcal{I}_h u\3bar$ & order\\
\hline
8 & 4.7148e-02 & -- & 2.0112e-02 & -- & 8.4797e-03 & --\\
\hline
16 & 1.1947e-02 & 1.9805 & 3.5666e-03 & 2.4954 & 1.0609e-03 & 2.9987\\
\hline
32 & 2.9972e-03 & 1.9950 & 6.3078e-04 & 2.4994 & 1.3263e-04 & 2.9998\\
\hline
64 & 7.4996e-04 & 1.9987 & 1.1151e-04 & 2.4999 & 1.6580e-05 & 3.0000\\
\hline
128 & 1.8753e-04 & 1.9997 & 1.9713e-05 & 2.5000 & 2.0725e-06 & 3.0000\\
\hline
\end{tabular}
\end{table}
\begin{table}[H]
\caption{Example 7.3. Convergence of $\|G_hu_h-\nabla u\|$ for $k=2$ on uniform meshes.} \label{sinsink=2uniPPR} \centering
\begin{tabular}{|c|c|c|c|c|c|c|}
\hline
\multirow{2}{*}{$N$} & \multicolumn{2}{|c|}{$\alpha=1$} & \multicolumn{2}{|c|}{$\alpha=2$} & \multicolumn{2}{|c|}{$\alpha=3$} \\
\cline{2-7}
& $\|G_h u_h-\nabla u\|$ & order & $\|G_h u_h-\nabla u\|$ & order & $\|G_h u_h-\nabla u\|$ & order\\
\hline
8 & 4.4523e-02 & -- & 1.4339e-02 & -- & 1.0521e-02 & --\\
\hline
16 & 7.4403e-03 & 2.5811 & 1.2958e-03 & 3.4680 & 9.8726e-04 & 3.4137\\
\hline
32 & 1.2791e-03 & 2.5402 & 1.0979e-04 & 3.5610 & 8.6269e-05 & 3.5165\\
\hline
64 & 2.2415e-04 & 2.5126 & 9.4297e-06 & 3.5414 & 7.6241e-06 & 3.5002\\
\hline
128 & 3.9531e-05 & 2.5034 & 8.3447e-07 & 3.4983 & 6.9244e-07 & 3.4608\\
\hline
\end{tabular}
\end{table}

Data in Tables \ref{sinsink=2uniPPR} demonstrate that the PPR gradient recovery is superconvergent to $\nabla u$. Numerical experiments for all three choices of $\alpha$ are of higher order convergence than our theoretical results. This surprising observation somehow indicates that there might be a more subtle relationship between the PPR recovery for WG solution and exact solution. As for now, \eqref{main thm} is the best theoretical estimate we can achieve. Improving the theoretical estimate will be an interesting future research project.

\begin{example}
{\textbf{(Convergence for less smooth functions)}}
\end{example}
In this example, we consider the problem $-\Delta u=1$, on the unit square with the homogeneous Dirichlet boundary condition. The exact solution can be written as
\begin{eqnarray}
u(x,y)=&&\frac{x(1-x)+y(1-y)}{4}-\frac{2}{\pi^3}\sum_{i=0}^{\infty}\frac{1}{(2i+1)^3(1+e^{-(2i+1)\pi})}\nonumber
\\
&&\cdot\Big[(e^{-(2i+1)\pi y} +e^{-(2i+1)\pi (1-y)})\sin(2i+1)\pi x\nonumber
\\
&&+(e^{-(2i+1)\pi x} +e^{-(2i+1)\pi (1-x)})\sin(2i+1)\pi y\Big]. \label{nonsmooth function}
\end{eqnarray}
The solution \eqref{nonsmooth function} is not as smooth as functions in previous examples.
In fact, the function is in $H^{3-\epsilon}(\Omega)$ for any $\epsilon > 0$, but not in $H^3(\Omega)$,
and it has a weak singularity $r^2\ln r$ at the four corners of the domain. Obviously, this nonsmoothness will affect the superconvergence of our numerical schemes.

In the numerical test below, we truncate first fifty terms of the infinite sum as an approximation of the exact solution. We test both $k=1$ and $k=2$ cases on uniform meshes. Tables \ref{multik=1uni} and \ref{multik=1uniPPR} report the convergence for $\3bar u_h-\mathcal{I}_hu\3bar$ and $\|G_hu_h-\nabla u\|$ for $k=1$. Tables \ref{multik=2uni} and \ref{multik=2uniPPR} report the convergence for $k=2$.

\begin{table}[!hbt]
\caption{Example 7.4. Convergence of $\3baru_h-\mathcal{I}_hu\3bar$ for $k=1$ on uniform meshes.} \label{multik=1uni} \centering
\begin{tabular}{|c|c|c|c|c|c|c|}
\hline
\multirow{2}{*}{$1/h$} & \multicolumn{2}{|c|}{$\alpha=1$} & \multicolumn{2}{|c|}{$\alpha=2$} & \multicolumn{2}{|c|}{$\alpha=3$} \\
\cline{2-7}
& $\3baru_h-\mathcal{I}_h u\3bar$ & order & $\3baru_h-\mathcal{I}_h u\3bar$ & order & $\3baru_h-\mathcal{I}_h u\3bar$ & order\\
\hline
8 & 8.1780e-02 & -- & 3.5022e-02 & -- & 1.4989e-02 & --\\
\hline
16 & 4.1405e-02 & 0.9820 & 1.2408e-02 & 1.4970 & 3.7598e-03 & 1.9952\\
\hline
32 & 2.0794e-02 & 0.9936 & 4.3854e-03 & 1.5005 & 9.4322e-04 & 1.9950\\
\hline
64 & 1.0412e-02 & 0.9979 & 1.5498e-03 & 1.5006 & 2.3720e-04 & 1.9915\\
\hline
128 & 5.2084e-03 & 0.9994 & 5.4783e-04 & 1.5003 & 5.9954e-05 & 1.9842\\
\hline
\end{tabular}
\end{table}
\begin{table}[H]
\caption{Example 7.4. Convergence of $\|G_hu_h-\nabla u\|$ for $k=1$ on uniform meshes.} \label{multik=1uniPPR} \centering
\begin{tabular}{|c|c|c|c|c|c|c|}
\hline
\multirow{2}{*}{$1/h$} & \multicolumn{2}{|c|}{$\alpha=1$} & \multicolumn{2}{|c|}{$\alpha=2$} & \multicolumn{2}{|c|}{$\alpha=3$} \\
\cline{2-7}
& $\|G_h u_h-\nabla u\|$ & order & $\|G_h u_h-\nabla u\|$ & order & $\|G_h u_h-\nabla u\|$ & order\\
\hline
8 & 1.5971e-02 & -- & 1.0089e-02 & -- & 9.1308e-03 & --\\
\hline
16 & 4.6605e-03 & 1.7769 & 2.4973e-03 & 2.0157 & 2.3884e-03 & 1.9347\\
\hline
32 & 1.4394e-03 & 1.6950 & 6.4013e-04 & 1.9639 & 6.2871e-04 & 1.9526\\
\hline
64 & 4.7289e-04 & 1.6059 & 1.6594e-04 & 1.9477 & 1.6476e-04 & 1.9320\\
\hline
128 & 1.6182e-04 & 1.5471 & 4.3901e-05 & 1.9183 & 4.3783e-05 & 1.9120\\
\hline
\end{tabular}
\end{table}

\begin{table}[H]
\caption{Example 7.4. Convergence of $\3baru_h-\mathcal{I}_hu\3bar$ for $k=2$ on uniform meshes.} \label{multik=2uni} \centering
\begin{tabular}{|c|c|c|c|c|c|c|}
\hline
\multirow{2}{*}{$1/h$} & \multicolumn{2}{|c|}{$\alpha=1$} & \multicolumn{2}{|c|}{$\alpha=2$} & \multicolumn{2}{|c|}{$\alpha=3$} \\
\cline{2-7}
& $\3baru_h-\mathcal{I}_h u\3bar$ & order & $\3baru_h-\mathcal{I}_h u\3bar$ & order & $\3baru_h-\mathcal{I}_h u\3bar$ & order\\
\hline
8 & 5.0628e-03 & -- & 2.3506e-03 & -- & 1.0261e-03 & --\\
\hline
16 & 1.4579e-04 & 1.7961 & 4.7629e-04 & 2.3031 & 1.4729e-04 & 2.8004\\
\hline
32 & 4.0745e-04 & 1.8392 & 9.3998e-05 & 2.3411 & 2.3929e-05 & 2.6219\\
\hline
64 & 1.1214e-04 & 1.8613 & 1.9466e-05 & 2.2717 & 7.9370e-06 & 1.5921\\
\hline
128 & 2.9974e-05 & 1.9035 & 3.6114e-06 & 2.4303 & 1.4716e-06 & 2.4313\\
\hline
\end{tabular}
\end{table}

\begin{table}[H]
\caption{Example 7.4. Convergence of $\|G_hu_h-\nabla u\|$ for $k=2$ on uniform meshes.} \label{multik=2uniPPR} \centering
\begin{tabular}{|c|c|c|c|c|c|c|}
\hline
\multirow{2}{*}{$1/h$} & \multicolumn{2}{|c|}{$\alpha=1$} & \multicolumn{2}{|c|}{$\alpha=2$} & \multicolumn{2}{|c|}{$\alpha=3$} \\
\cline{2-7}
& $\|G_h u_h-\nabla u\|$ & order & $\|G_h u_h-\nabla u\|$ & order & $\|G_h u_h-\nabla u\|$ & order\\
\hline
8 & 5.8500e-03 & -- & 2.4904e-03 & -- & 2.1339e-03 & --\\
\hline
16 & 1.5089e-03 & 1.9549 & 5.7978e-04 & 2.1028 & 5.4716e-04 & 1.9807\\
\hline
32 & 3.8325e-04 & 1.9771 & 1.4062e-04 & 2.0437 & 1.3706e-04 & 1.9905\\
\hline
64 & 9.7663e-05 & 1.9724 & 3.5701e-05 & 1.9777 & 3.5286e-05 & 1.9550\\
\hline
128 & 2.3997e-05 & 2.0250 & 3.5269e-06 & 2.0881 & 8.3497e-06 & 2.0789\\
\hline
\end{tabular}
\end{table}

We note that for $k=1$, our superconvergence analysis requires the exact solution to be in $H^3$. Data in Tables \ref{multik=1uni}-\ref{multik=1uniPPR} demonstrate that the convergence orders perfectly match or are even better than orders in our theoretical analysis. For higher order approximation $k=2$, to get the analytical superconvergence order, we need the exact solution to be in $H^4$. However, the exact solution here is barely in $H^3$. Hence, some superconvergence behavior does not exist, which is reflected in Tables \ref{multik=2uni}-\ref{multik=2uniPPR}.

{\bf A final remark}. The condition regarding $\alpha$ is sharp in the supercloseness result (4.1). As we can see from data in Tables 7.1, 7.3, 7.5, and 7.7, the convergent rate follows loyally to the predicted $k+(\alpha-1)/2$. On the other hand, the condition regarding $\alpha$ may not be necessary for our superconvergence result in Theorem 6.4 as we can see from data in Tables 7.2, 7.4, 7.6, and 7.8: when $\alpha =1, 2$, the supercloseness lost but the superconvergence still exists, since the supercloseness result (4.1) is a sufficient condition for Theorem 6.4, not a necessary condition.


\end{document}